\newtheorem{thm}{Theorem}
\numberwithin{thm}{section}
\newtheorem{lem}[thm]{Lemma}
\newtheorem{prop}[thm]{Proposition}
\newtheorem{cor}[thm]{Corollary}
\newtheorem{exam}[thm]{Example}
\newtheorem{rema}[thm]{Remark}
\newtheorem{defi}[thm]{Definition}
\newtheorem*{thm2}{Theorem}
\newtheorem*{cor2}{Corollary}
\begin{document}
\begin{center}
\huge{Amitsur subgroup and noncommutative motives}\\[1cm]
\end{center}

\begin{center}

\large{Sa$\mathrm{\check{s}}$a Novakovi$\mathrm{\acute{c}}$}\\[0,4cm]
{\small January 2021}\\[0,3cm]
\end{center}
\begin{center}
\begin{otherlanguage*}{russian}
	\emph{ЗА ИЛИАНУ.}	
\end{otherlanguage*}
\end{center}

\noindent{\small \textbf{Abstract}. 
This paper addresses the problem of calculating the Amitsur subgroup of a proper $k$-scheme. Under mild hypothesis, we calculate this subgroup for proper $k$-varieties $X$ with $\mathrm{Pic}(X)\simeq \mathbb{Z}^{\oplus m}$, using a classification of so called absolutely split vector bundles ($AS$-bundles for short). We also show that the Brauer group of $X$ is isomorphic to $\mathrm{Br}(k)$ modulo the Amitsur subgroup, provided $X$ is geometrically rational. Our results also enable us to classify $AS$-bundles on twisted flags. Moreover, we find an alternative proof for a result due to Merkurjev and Tignol, stating that the Amitsur subgroup of twisted flags is generated by a certain subset of the set of classes of Tits algebras of the corresponding algebraic group. This result of Merkurjev and Tignol is actually a corollary of a more general theorem that we prove. The obtained results have also consequences for the noncommutative motives of the twisted flags under consideration. In particular, we show that a certain noncommutative motive of a twisted flag is a birational invariant, generalizing in this way a result of Tabuada. We generalize this result for $X$ having a certain type of semiorthogonal decomposition.}\\

\section{Introduction}
Let $f\colon X\rightarrow S$ be a scheme that is seperated and of finite type over a Noetherian scheme $S$ and assume $\mathcal{O}_S\xrightarrow{\simeq } f_*\mathcal{O}_X$. Then, for each $S$-scheme $T$ there is an exact sequence
\begin{eqnarray*}
0\longrightarrow \mathrm{Pic}(T)\longrightarrow \mathrm{Pic}(X_T)\longrightarrow \mathrm{Pic}_{(X/S)(\mathrm{fppf})}(T)\stackrel{\delta}{\longrightarrow}\mathrm{Br}'(T)\longrightarrow \mathrm{Br}'(X_T).
\end{eqnarray*}	
Here $\mathrm{Pic}_{(X/S)}$ denotes the Picard functor and $\mathrm{Pic}_{(X/S)(\mathrm{fppf})}$ the associated sheaf in the fppf topology. Specializing the above sequence to the case $X$ a proper variety over a field $k$ and $T=\mathrm{Spec}(k)$, Liedtke \cite{LIT} called the group $\delta(\mathrm{Pic}_{(X/S)(\mathrm{fppf})}(k))$ the \emph{Amitsur subgroup} of $X$ in $\mathrm{Br}(k)$. This subgroup is denoted by $\mathrm{Am}(X)$. If $X$ is smooth and proper over $k$, it follows that $\mathrm{Am}(X)=\mathrm{ker}(\mathrm{Br}(k)\rightarrow \mathrm{Br}(k(X)))$. The group $\mathrm{ker}(\mathrm{Br}(k)\rightarrow \mathrm{Br}(k(X)))$ is also denoted by $\mathrm{Br}(k(X)/k)$ and was studied for instance in \cite{METI} and \cite{CT}. If $X$ is Brauer--Severi corresponding to a central simple algebra $A$, it is a classical result due to Ch\^{a}telet that $\mathrm{Am}(X)=\langle [A]\rangle$. In this case $\mathrm{Br}(X)\simeq \mathrm{Br}(k)/\mathrm{Am}(X)$. It is shown in \cite{LIT} that $\mathrm{Am}(X)$ is a birational invariant for $X$ smooth and proper over $k$. Note that if $X$ admits a $k$-rational point, then $\mathrm{Am}(X)=0$. On the other hand, there are proper varieties with trivial Amitsur subgroup without rational points (see \cite{LIT}, Proposition 5.4). Special varieties for which $\mathrm{Am}(X)$ is calculated can be found in \cite{CT}, \cite{LIT} and \cite{METI}. In this paper, we want to calculate $\mathrm{Am}(X)$ and $\mathrm{Br}(X)$ for a certain class of proper $k$ schemes $X$. Furthermore, we want to explain the consequences of our results for the noncommutative motives.

To state our results, we fix some notations and recall some facts.
Let $X$ be a proper and geometrically integral $k$-scheme and denote by $X_s$ the base change $X\otimes_k k^s$ to the separable closure. If $\mathrm{Pic}(X_s)\simeq \mathbb{Z}^{\oplus m}$, then $\mathrm{Pic}(X)\simeq r_1\mathbb{Z}\oplus \cdots \oplus r_m\mathbb{Z}$.  Let us fix a basis $\mathcal{L}_1,...,\mathcal{L}_m$ of $\mathrm{Pic}(X_s)$. 
 Using a basis of $\mathrm{Pic}(X)$ one can show that there are line bundles $\mathcal{J}_i\in\mathrm{Pic}(X)$ 
satisfying $\mathcal{J}_i\otimes_k k^s\simeq \mathcal{L}_i^{\otimes c_i}$ for some integers $c_i\geq 1$. Now choose the line bundles $\mathcal{J}_i$ such that the $c_i$ are minimal. According to \cite{NO}, Proposition 3.4, these $\mathcal{J}_i$ are unique up to isomorphism. 
Assume there are pure vector bundles $\mathcal{M}_i$ of type $\mathcal{L}_i\in \mathrm{Pic}(X_s)$. A vector bundle $\mathcal{E}$ on a proper $k$-scheme $X$ is called \emph{pure of type} $\mathcal{W}$ if there is an indecomposable vector bundle $\mathcal{W}$ on $X\otimes_k \bar{k}$ such that $\mathcal{E}\otimes_k {\bar{k}}\simeq \mathcal{W}^{\oplus m}$. Being pure of type $\mathcal{L}_i$ is equivalent to $\mathcal{L}_i\in \mathrm{Pic}_{\Gamma}(X_s)$, where $\Gamma$ denotes the absolute Galois group (see \cite{NO}, Theorem 4.5). We know from \cite{NO}, Proposition 3.5 that the bundle $\mathcal{M}_i$ is unique up to isomorphism. We set $\mathcal{M}_{\mathcal{L}_i}:=\mathcal{M}_i$. It is easy to see that for any line bundle $\mathcal{L}_i^{\otimes a}\in \mathrm{Pic}(X_s)$ there is an indecomposable pure bundle of type $\mathcal{L}_i^{\otimes a}$. Indeed, let $s_i=\mathrm{rk}(\mathcal{M}_{\mathcal{L}_i})$ and consider $(\mathcal{L}_i^{\oplus s_i})^{\otimes a}\simeq (\mathcal{L}_i^{\otimes a})^{\oplus s_i^a}$. Then we get 
$\mathcal{M}_{\mathcal{L}_i}^{\otimes a}\otimes_k k^s\simeq (\mathcal{L}_i^{\oplus s_i})^{\otimes a}\simeq (\mathcal{L}_i^{\otimes a})^{\oplus s_i^a}$.

\noindent
Considering the Krull--Schmidt decomposition of $\mathcal{M}_{\mathcal{L}_i}^{\otimes a}$ and taking into account that all indecomposable direct summands are isomorphic (see \cite{NO}, proof of Proposition 3.6 and Remark 3.7), we get an, up to isomorphism, unique indecomposable vector bundle $\mathcal{M}_{\mathcal{L}_i^{\otimes a}}$ such that $\mathcal{M}_{\mathcal{L}_i^{\otimes a}}\otimes_k k^s\simeq (\mathcal{L}_i^{\otimes a})^{\oplus s_i(a)}$, where $s_i(a)=\mathrm{rank}(\mathcal{M}_{\mathcal{L}_i^{\otimes a}})$. 
Using Krull--Schmidt decomposition again, we can use our indecomposable vector bundles $\mathcal{M}_{\mathcal{L}_i^{\otimes a}}$ and take the tensor product of these to get an indecomposable vector bundle $\mathcal{M}_{(a_1,...,a_m)}$ of type $\mathcal{L}_1^{\otimes a_1}\otimes\cdots \otimes \mathcal{L}_m^{\otimes a_m}$. Again, the bundles $\mathcal{M}_{(a_1,...,a_m)}$ are unique up to isomorphism. Recall that a vector bundle $\mathcal{E}$ on a $k$-scheme $X$ is called \emph{absolutely split} if it splits after base change as a direct sum of line bundles on $X\otimes_k \bar{k}$. For an absolutely split vector bundle we shortly write \emph{AS-bundle}. Over an algebraically closed field, a classical result of Grothendieck classifies all $AS$-bundles on $\mathbb{P}^1$. Note that on $\mathbb{P}^1$ the result of Grothendieck shows that actually all vector bundles are $AS$-bundles. In \cite{NOO} the author classifies vector bundles on twisted forms of $\mathbb{P}^1$. The twisted forms of $\mathbb{P}^1$ are Brauer--Severi curves (or smooth non-degenerate quadrics without rational point). Generalizing these results further, in \cite{NO}, the author clssifies $AS$-bundles on proper $k$-schemes. Moreover, for $X$ with cyclic Picard group the indecomposable $AS$-bundles are determined explicitely. In the present paper we generalize this result to the case $\mathrm{Pic}(X_s)\simeq \mathbb{Z}^{\oplus m}$.

\begin{thm2}[Theorem 4.5] 
	Let $X$ be a proper and geometrically integral $k$-scheme with $\mathrm{Pic}(X_s)\simeq \mathbb{Z}^{\oplus m}$ and let $\mathcal{L}_1,...,\mathcal{L}_m\in \mathrm{Pic}(X_s)$ be the basis from above. Let $\mathcal{J}_i\in \mathrm{Pic}(X)\simeq \mathbb{Z}^{\oplus m}$ be the up to isomorphism unique line bundles satisfying $\mathcal{J}_i\otimes_k k^s\simeq \mathcal{L}_i^{\otimes c_i}$ with $c_i$ being minimal. Assume there are indecomposable pure bundles $\mathcal{M}_{\mathcal{L}_i}$ of type $\mathcal{L}_i$. Then all indecomposable $AS$-bundles $\mathcal{E}$ are of the form
	\begin{eqnarray*}
		\mathcal{J}_1^{\otimes b_1}\otimes\cdots \otimes \mathcal{J}_m^{\otimes b_m} \otimes \mathcal{M}_{(a_1,...,a_m)}
	\end{eqnarray*}
	with unique $b_i\in\mathbb{Z}$ and $0\leq a_j\leq c_j-1$.
\end{thm2} 
Notice that Theorem 4.5 is a generalization of \cite{NO}, Theorem 5.1. Using Theorem 4.5 we will prove the following result about the Amitsur subgroup and the Brauer group $\mathrm{Br}(X)$.
\begin{thm}
Let $X$ be proper and geometrically integral $k$-scheme with $\mathrm{Pic}(X_s)\simeq \mathbb{Z}^{\oplus m}$. With the notation and assumption from above, the Amitsur subgroup $\mathrm{Am}(X)$ is generated by the classes of central simple algebras $\mathrm{End}(\mathcal{M}_{e_i})$, where $i\in\{1,...,m\}$ for which $c_i\geq 2$. In particular, if $\mathrm{Pic}(X_s)\simeq \mathbb{Z}$ is generated by an ample line bundle $\mathcal{L}$ and if there exists a pure vector bundle $\mathcal{M}_{\mathcal{L}}$ of type $\mathcal{L}$, then $\mathrm{Am}(X)$ is cyclic and generated by $[\mathrm{End}(\mathcal{M}_{\mathcal{L}})]$. Moreover, if $X$ is geometrically rational, then $\mathrm{Br}(X)\simeq \mathrm{Br}(k)/\mathrm{Am}(X)$.
\end{thm}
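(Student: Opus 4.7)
The plan is to use the five-term exact sequence from the introduction with $T=\mathrm{Spec}(k)$, which identifies $\mathrm{Am}(X)$ with the image under $\delta$ of the cokernel of $\mathrm{Pic}(X)\hookrightarrow \mathrm{Pic}_{(X/k)(\mathrm{fppf})}(k)$. For $X$ proper and geometrically integral one has $\mathrm{Pic}_{(X/k)(\mathrm{fppf})}(k)=\mathrm{Pic}(X_s)^{\Gamma}$. The standing hypothesis that each basis element $\mathcal{L}_i$ admits a pure bundle $\mathcal{M}_{\mathcal{L}_i}$ forces $\mathcal{L}_i\in\mathrm{Pic}_{\Gamma}(X_s)$ by Theorem 4.5 of \cite{NO}, so the Galois action on $\mathrm{Pic}(X_s)=\bigoplus_i\mathbb{Z}\mathcal{L}_i$ is trivial and $\mathrm{Pic}(X_s)^{\Gamma}=\mathrm{Pic}(X_s)$. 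By definition of the $\mathcal{J}_i$, the image of $\mathrm{Pic}(X)$ in $\mathrm{Pic}(X_s)$ is $\bigoplus_i c_i\mathbb{Z}\mathcal{L}_i$. Hence $\mathrm{Am}(X)$ is generated by $\delta(\mathcal{L}_i)$ for $i=1,\dots,m$, and $\delta(\mathcal{L}_i)=0$ exactly when $c_i=1$.

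The technical core is the identification $\delta(\mathcal{L}_i)=[\mathrm{End}(\mathcal{M}_{\mathcal{L}_i})]$ in $\mathrm{Br}(k)$. Fixing a $k^s$-isomorphism $\mathcal{M}_{\mathcal{L}_i}\otimes_k k^s\simeq \mathcal{L}_i^{\oplus s_i}$ and pushing the Galois descent datum through $\mathrm{Aut}(\mathcal{L}_i^{\oplus s_i})=\mathrm{GL}_{s_i}(k^s)$ modulo the scalar automorphisms $\mathrm{Aut}(\mathcal{L}_i)=\mathbb{G}_m(k^s)$ produces a class in $H^1(\Gamma,\mathrm{PGL}_{s_i})$. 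Its image under the boundary of $1\to\mathbb{G}_m\to\mathrm{GL}_{s_i}\to\mathrm{PGL}_{s_i}\to 1$ is on the one hand the Brauer class $[\mathrm{End}(\mathcal{M}_{\mathcal{L}_i})]$ of the descended central simple algebra, and on the other hand the coboundary $\delta(\mathcal{L}_i)$ of the fppf Picard sequence, as one verifies by unwinding both constructions on the level of representing cocycles. Minimality of $c_i$ then forces the order of $\delta(\mathcal{L}_i)$ to be exactly $c_i$, completing the description of $\mathrm{Am}(X)$.

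For the final assertion I would appeal to the Hochschild--Serre spectral sequence for the morphism $X_s\to X$. When $X$ is (smooth and) geometrically rational, the birational invariance of the cohomological Brauer group of smooth proper varieties combined with $\mathrm{Br}'(\mathbb{P}^n_{k^s})=0$ gives $\mathrm{Br}'(X_s)=0$. The resulting low-degree exact sequence reads
\begin{equation*}
0\longrightarrow \mathrm{Br}(k)/\mathrm{Am}(X)\longrightarrow \mathrm{Br}(X)\longrightarrow H^1(\Gamma,\mathrm{Pic}(X_s)).
\end{equation*}
Since $\mathrm{Pic}(X_s)\simeq \mathbb{Z}^{\oplus m}$ carries trivial Galois action and $\Gamma$ is profinite, $H^1(\Gamma,\mathbb{Z}^{\oplus m})=\mathrm{Hom}_{\mathrm{cts}}(\Gamma,\mathbb{Z}^{\oplus m})=0$, so $\mathrm{Br}(X)\simeq \mathrm{Br}(k)/\mathrm{Am}(X)$.

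I expect the main obstacle to be the cocycle-level identification in the second paragraph: matching the coboundary $\delta$ of the Picard-functor sequence with the Brauer class coming from descent of $\mathcal{M}_{\mathcal{L}_i}$. Everything else reduces to the classification of $AS$-bundles (Theorem 4.5), the structural properties of the $\mathcal{J}_i$ already established in \cite{NO}, and standard Galois cohomology.
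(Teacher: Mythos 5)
Your proposal is correct in substance and shares the paper's skeleton: the five-term sequence of the Picard functor specialized to $T=\mathrm{Spec}(k)$, the identification of $\delta$ on a class represented by a pure bundle with the Brauer class of its endomorphism algebra, and the Hochschild--Serre spectral sequence with $H^1(\Gamma,\mathrm{Pic}(X_s))=0$ for the statement $\mathrm{Br}(X)\simeq \mathrm{Br}(k)/\mathrm{Am}(X)$. Where you genuinely diverge is the middle: the paper evaluates $\delta$ by first establishing the bijection between $k$-points of $\mathrm{Pic}_{X/k}$ and indecomposable $AS$-bundles (Theorem 4.4), classifying those bundles (Theorem 4.5), and then reducing to the basis classes via the tensor-product/Brauer-equivalence lemmas (Lemma 4.6, Lemma 4.8, and \cite{PAS}, Lemma 3.4); you instead identify $\mathrm{Pic}_{(X/k)(\mathrm{fppf})}(k)$ with $\mathrm{Pic}(X_s)^{\Gamma}=\mathrm{Pic}(X_s)$ directly (Galois invariance of the $\mathcal{L}_i$ coming from the existence of the pure bundles) and use only that $\delta$ is a homomorphism, so no classification of $AS$-bundles is needed. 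This is cleaner, at the price of having to prove the cocycle identity $\delta(\mathcal{L}_i)=[\mathrm{End}(\mathcal{M}_{\mathcal{L}_i})]$ (up to inverse, which is harmless for generation); you only sketch this nonabelian $H^1(\Gamma,\mathrm{PGL}_{s_i})$ comparison, but the paper likewise asserts $\delta(\mathcal{E})=[\mathrm{End}(\mathcal{E})]$ without proof, so this is not a gap relative to the paper's own standard of detail. Two small remarks: your claim that the image of $\mathrm{Pic}(X)$ in $\mathrm{Pic}(X_s)$ is exactly $\bigoplus_i c_i\mathbb{Z}\mathcal{L}_i$ is not needed and not obvious for a general finite-index subgroup, but the facts you actually use (generation of $\mathrm{Am}(X)$ by the $\delta(\mathcal{L}_i)$, vanishing of $\delta(\mathcal{L}_i)$ iff $c_i=1$, order $c_i$ by minimality) follow directly from exactness, so nothing breaks; and your parenthetical insertion of smoothness for the geometric-rationality step (birational invariance of $\mathrm{Br}'$, $\mathrm{Br}(X_s)=0$) is a point the paper glosses over.
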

Schemes satisfying the assuptions of Theorem 1.1 include twisted flag varieties. In order to apply Theorem 1.1 to twisted flags and to obtain in this way an alternative proof of a result of Merkurjev and Tignol \cite{METI}, Theorem B, we recall the definition and some facts on twisted flags and refer to \cite{MPW} for details. 

Let $G$ be a semisimple algebraic group over a field $k$ and $G_s=G\otimes_k k^{s}$. For a parabolic subgroup $P$ of $G_s$, one has a homogeneous variety $G_s/P$. A \emph{twisted flag} is variety $X$ such that $X\otimes_k k^{s}$ is $G_s$-isomorphic to $G_s/P$ for some $G$ and some parabolic $P$ in $G_s$. Any twisted flag is smooth, absolutely irreducible and reduced. An algebraic group $G'$ is called twisted form of $G$ iff $G'_s\simeq G_s$ iff $G'={_\gamma} G$ for some $\gamma\in Z^1(k,\mathrm{Aut}(G_s))$. The group $G'$ is called an \emph{inner form} of $G$ if there is a $\delta\in Z^1(k,\bar{G}(k^{s}))$ with $G'={_\delta}G$. Here $\bar{G}=G/Z(G)$ where $Z(G)$ denotes the center. For an arbitrary semisimple $G$ over $k$, there is a unique (up to isomorphism) split semisimple group $G^d$ such that $G_s\simeq G_s^d$. If $G$ is an inner form of $G^d$, then $G$ is said to be of \emph{inner type}. For instance, let $A$ be a central simple algebra over $k$ of degree $n$ and $G=\mathrm{PGL}_1(A)$, then $G_s\simeq \mathrm{PGL}_n$ over $k^{s}$. Hence $G$ is an inner form of $\mathrm{PGL}_n$. Since $\mathrm{PGL}_n$ is split, $G=\mathrm{PGL}_1(A)$ is of inner type. For a classification of simple groups of classical type we refer to \cite{KNU}, p.366-373.

\noindent
Now let $G$ be a semisimple (so connected) simply connected algebraic group over $k$ and $P$ a parabolic subgroup. Let $G/P$ be a flag variety and note that $G/P=\bar{G}/\bar{P}$. Let $\gamma\colon \mathrm{Gal}(k^s|k)\rightarrow G(k^s)$ be a 1-cocycle. We denote by $X:={_\gamma}(G/P)$ the twisted form of $G/P$ corresponding to $\gamma$. Notice that ${_\gamma}(G/P)\otimes_k k^s\simeq G_s/P_s$ for a suitable parabolic subgroup $P_s$ of $G_s$. The next corollary is essentially \cite{METI}, Theorem B. Below $\mathrm{Ch}(P_s)$ denotes the character group and $\mathrm{Ch}(P_s)^{\Gamma}$ the character group of Galois invariant characters.
\begin{cor}
	Let $G$ be a semisimple (so connected) simply connected algebraic group over $k$ and $P$ a parabolic subgroup. Denote by $X={_\gamma}(G/P)$ a twisted flag. Then $\mathrm{Am}(X)$ is generated by the Brauer classes of Tits algebras of $G$ corresponding to the elements of a basis of $\mathrm{Ch}(P_s)^{\Gamma}$. 
	Moreover, $\mathrm{Br}(X)\simeq \mathrm{Br}(k)/\mathrm{Am}(X)$.
\end{cor}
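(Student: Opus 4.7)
\emph{Plan.} The plan is to deduce Corollary 1.2 from Theorem 1.1 by translating the Picard-group data of the twisted flag into Tits-algebra data.

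First I would verify the hypotheses of Theorem 1.1 for $X = {_{\gamma}}(G/P)$. Twisted flags are smooth, proper, and geometrically integral, the classical identification $\mathrm{Pic}(X_s)\simeq\mathrm{Pic}(G_s/P_s)\simeq \mathrm{Ch}(P_s)$ shows that $\mathrm{Pic}(X_s)\simeq\mathbb{Z}^{\oplus m}$, and $X$ is geometrically rational because $G_s/P_s$ is. In particular, the Brauer-group statement $\mathrm{Br}(X)\simeq\mathrm{Br}(k)/\mathrm{Am}(X)$ will follow automatically from the corresponding clause of Theorem 1.1 once the Amitsur-generation assertion is established.

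Next I would choose a basis $e_1,\dots,e_r$ of $\mathrm{Ch}(P_s)^{\Gamma}$ inside $\mathrm{Pic}(X_s)$ and interpret the pure bundles of Theorem 1.1 via representation theory. For each $e_i$, the irreducible $G_s$-representation $V(e_i)$ is $\Gamma$-stable; its obstruction to descending to $k$ is by definition the Tits algebra $A(e_i)\in \mathrm{Br}(k)$, satisfying $A(e_i)\otimes_k k^s\simeq\mathrm{End}_{k^s}(V(e_i))$. Twisting the corresponding $G$-homogeneous vector bundle on $G/P$ by the cocycle $\gamma$ then produces a vector bundle $\mathcal{M}_i$ on $X$ with $\mathcal{M}_i\otimes_k k^s\simeq L(e_i)^{\oplus \dim V(e_i)}$ and $\mathrm{End}(\mathcal{M}_i)\simeq A(e_i)$; hence $\mathcal{M}_i$ is pure of type $L(e_i)$ and $[\mathrm{End}(\mathcal{M}_i)]=[A(e_i)]$ in $\mathrm{Br}(k)$. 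Since the Amitsur boundary $\delta$ from the exact sequence of the introduction factors through $\mathrm{Pic}(X_s)^{\Gamma}$, any basis vectors of $\mathrm{Pic}(X_s)$ that are not $\Gamma$-invariant contribute nothing to $\mathrm{Am}(X)$ and need not be tracked.

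Applying Theorem 1.1 then gives that $\mathrm{Am}(X)$ is generated by the classes $[A(e_i)]$ for those $e_i$ with $c_i\geq 2$. But $c_i=1$ holds exactly when $L(e_i)$ descends to a line bundle on $X$, equivalently when $A(e_i)$ is split; the trivial classes are harmless, so $\mathrm{Am}(X)$ is generated by the Brauer classes of the Tits algebras corresponding to a basis of $\mathrm{Ch}(P_s)^{\Gamma}$, as claimed. The main obstacle I foresee is the explicit construction of the pure bundle $\mathcal{M}_i$ with $\mathrm{End}(\mathcal{M}_i)\simeq A(e_i)$, and in particular the outer-type case, where a basis of $\mathrm{Ch}(P_s)^{\Gamma}$ consists of Galois-orbit sums of fundamental weights rather than individual weights; in that situation the relevant Tits algebra is the one attached to the Weyl module with orbit-sum highest weight, and one has to check that the twisting construction by $\gamma$ is compatible with passing to orbit sums so that the identification $[\mathrm{End}(\mathcal{M}_i)]=[A(e_i)]$ still holds.
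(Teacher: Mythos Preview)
Your overall strategy---verify the hypotheses of Theorem~1.1 for the twisted flag, identify the endomorphism algebras $\mathrm{End}(\mathcal{M}_{e_i})$ with Tits algebras, and read off both conclusions---is exactly the route the paper takes. The differences lie in how the pure bundles are produced and how their endomorphism rings are linked to Tits algebras.

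There is, however, a genuine gap in your explicit construction of $\mathcal{M}_i$. You propose to take the irreducible $G_s$-representation $V(e_i)$ and form ``the corresponding $G$-homogeneous vector bundle on $G/P$''. If this means the associated bundle $G_s\times^{P_s}V(e_i)$ with $P_s$ acting via restriction of the $G_s$-action, that bundle is \emph{trivial}: the map $(g,v)\mapsto (gP_s,\,gv)$ identifies it with $(G_s/P_s)\times V(e_i)$. So after twisting by $\gamma$ you obtain a bundle that is pure of type $\mathcal{O}_{X_s}$, not of type $L(e_i)$, and its endomorphism algebra is a matrix algebra over $k$, not the Tits algebra. What one actually needs is a $P$-representation defined over $k$ whose base change to $k^s$ is a direct sum of copies of the one-dimensional $P_s$-representation $e_i$; producing such a representation and computing its endomorphism ring is precisely where the Tits-algebra input enters, and it is not automatic from the irreducible $G$-module $V(e_i)$.

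The paper circumvents this by arguing more indirectly. It first asserts (citing \cite{METI}, p.~55f) that one can choose a basis $\mathcal{L}_1,\dots,\mathcal{L}_m$ of the \emph{full} group $\mathrm{Pic}(X_s)$ consisting of Galois-invariant line bundles; this is what Theorem~1.1 literally requires, and it is stronger than just choosing a basis of $\mathrm{Ch}(P_s)^{\Gamma}$. Existence of the indecomposable pure bundles $\mathcal{W}_{\mathcal{L}_i}$ is then obtained abstractly from \cite{NO}, Proposition~4.3 (equivalently Theorem~4.4 here), with no explicit representation-theoretic model. The identification of $[\mathrm{End}(\mathcal{W}_{\mathcal{L}_i})]$ with a product of Tits-algebra classes is not done by a direct computation either, but by invoking \cite{METI}, p.~37 for certain distinguished bundles $\mathcal{B}_{\mathcal{A}_j}$ and then transporting along a change of basis in $\mathrm{Pic}(X_s)$ using the multiplicativity Lemma~4.6(iii). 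Your idea of building $\mathcal{M}_i$ explicitly and reading off $A(e_i)$ is cleaner in spirit, but to make it work you would need to replace $V(e_i)$ by the correct $P$-module (as in Panin's or Merkurjev--Tignol's constructions) and justify the claimed isomorphism $\mathrm{End}(\mathcal{M}_i)\simeq A(e_i)$; the concern you flag about the outer-type case is then exactly the place where this bookkeeping becomes delicate.
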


\noindent
The proof of Theorem 1.1 actually uses Theorem 4.5 which enables us to classify all indecomposable $AS$-bundles on $X$. It is a non-trivial fact that these bundles are in one-to-one correspondence with the closed points of the Picard scheme $\mathrm{Pic}_{(X/k)(\mathrm{fppf})}$ (see Theorem 4.4). Notice that Theorem 4.5 generalizes \cite{NO}, Theorem 5.1.
\begin{cor}
	Let $X_i$ be a twisted form of $G_i/P_i$ with $G_i$ and $P_i$ as in Theorem 1.1 and let $X=X_1\times\cdots \times X_n$. Let $D_i$ be the set of generators of $\mathrm{Am}(X_i)$ obtained from Corollary 1.2. Then $\mathrm{Am}(X)$ is generated by $\cup D_i$. Moreover, $\mathrm{Br}(X)\simeq \mathrm{Br}(k)/\mathrm{Am}(X)$.
\end{cor}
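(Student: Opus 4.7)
The plan is to reduce Corollary 1.3 to a direct application of Theorem 1.1 to the product $X = X_1 \times \cdots \times X_n$, using that the relevant data on $X$ assembles from the data on each factor $X_i$.

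First, I verify the hypotheses of Theorem 1.1 for $X$. Each $X_i$ is smooth, proper, geometrically integral and geometrically rational (twisted flags become flag varieties over $k^s$, which are rational), so $X$ inherits these properties and in particular is geometrically rational. For the Picard group over the separable closure I would use the standard Künneth-type fact: since each $(X_i)_s$ is a flag variety (hence $H^1(\mathcal{O})=0$ and $\mathrm{Pic}^0$ trivial), one has $\mathrm{Pic}(X_s) \simeq \bigoplus_i \mathrm{Pic}((X_i)_s) \simeq \mathbb{Z}^{\oplus m}$ where $m = \sum m_i$, with basis given by the pullbacks $\pi_i^* \mathcal{L}_{i,j}$ of the bases of the factors along the projections $\pi_i\colon X \to X_i$. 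The Galois action respects this decomposition, so the invariants $\mathrm{Pic}(X)$ also decompose as $\bigoplus_i \mathrm{Pic}(X_i)$, and the minimal line bundles $\mathcal{J}_{i,j}$ on $X$ are precisely the pullbacks of the corresponding $\mathcal{J}_{i,j}$ on $X_i$, with the same minimal multiples $c_{i,j}$.

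Next, I identify the pure bundles. If $\mathcal{M}_{\mathcal{L}_{i,j}}$ is the indecomposable pure bundle of type $\mathcal{L}_{i,j}$ on $X_i$ (which exists for twisted flags, as shown earlier in the paper), then $\pi_i^* \mathcal{M}_{\mathcal{L}_{i,j}}$ is indecomposable (its base change to $k^s$ is a power of a line bundle, hence splits into copies of that line bundle, and by Krull--Schmidt one identifies it with $\mathcal{M}_{\pi_i^*\mathcal{L}_{i,j}}$ on $X_s$). Thus the pure bundles on $X$ corresponding to the chosen basis are exactly the pullbacks of those on the factors. Now applying Theorem 1.1 to $X$, the Amitsur subgroup $\mathrm{Am}(X)$ is generated by the classes $[\mathrm{End}(\mathcal{M}_{e_{i,j}})]$ as $(i,j)$ ranges over basis indices with $c_{i,j}\geq 2$.

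It remains to match these generators with $\bigcup_i D_i$. For this I would use the projection formula together with $(\pi_i)_*\mathcal{O}_X = \mathcal{O}_{X_i}$ (which follows from properness of the other factors and $H^0(X_j,\mathcal{O})=k$): this yields $\mathrm{End}_X(\pi_i^*\mathcal{M}) \simeq \mathrm{End}_{X_i}(\mathcal{M})$ as $k$-algebras, so $[\mathrm{End}(\pi_i^*\mathcal{M}_{e_{i,j}})] = [\mathrm{End}(\mathcal{M}_{e_{i,j}})]$ in $\mathrm{Br}(k)$. Hence the generator set from Theorem 1.1 applied to $X$ is exactly $\bigcup_i D_i$. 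Finally, since $X$ is geometrically rational, the last sentence of Theorem 1.1 gives $\mathrm{Br}(X)\simeq \mathrm{Br}(k)/\mathrm{Am}(X)$.

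The main technical obstacle is the decomposition $\mathrm{Pic}(X_s) \simeq \bigoplus \mathrm{Pic}((X_i)_s)$ together with its compatibility with the Galois action, the minimal integers $c_{i,j}$, and the construction of the pure bundles; once this bookkeeping is set up, everything reduces cleanly to Theorem 1.1 applied to $X$.
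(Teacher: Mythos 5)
Your overall strategy coincides with the paper's: the paper's own argument is only a sketch, noting $\mathrm{Pic}(X_s)\simeq \mathrm{Pic}((X_1)_s)\oplus\cdots\oplus\mathrm{Pic}((X_n)_s)$ because the factors are rational over $k^s$, and then saying ``proceed as in the proof of Theorem 1.1'' with a citation of \cite{MPW}, Corollary 2.3. You replace that citation by explicit bookkeeping: pure bundles on $X$ of the pulled-back types are obtained as $\pi_i^*\mathcal{M}_{\mathcal{L}_{i,j}}$, and $\mathrm{End}_X(\pi_i^*\mathcal{M})\simeq\mathrm{End}_{X_i}(\mathcal{M})$ via $(\pi_i)_*\mathcal{O}_X=\mathcal{O}_{X_i}$ and the projection formula, so the generators produced by Theorem 1.1 for $X$ are exactly $\cup D_i$, and geometric rationality of $X$ gives the Brauer group statement. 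This is a legitimate filling-in of the same reduction; note that indecomposability of $\pi_i^*\mathcal{M}_{\mathcal{L}_{i,j}}$ is best justified precisely by your endomorphism computation ($\mathrm{End}$ has no nontrivial idempotents), not by the Krull--Schmidt remark you give first.

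There is, however, one intermediate assertion that is false as stated: ``the invariants $\mathrm{Pic}(X)$ also decompose as $\bigoplus_i\mathrm{Pic}(X_i)$.'' Already for $X=\mathrm{BS}(1,A)\times\mathrm{BS}(1,A)$ with $A$ nonsplit, the line bundle $\mathcal{O}(1)\boxtimes\mathcal{O}(-1)$ is defined over $k$ (its descent obstruction $[A]^{-1}\cdot[A]=0$ vanishes), while neither factor of $\mathcal{O}(1)$ descends; so $\mathrm{Pic}(X)$ is strictly larger than the direct sum of the $\mathrm{Pic}(X_i)$, and the paper only claims $\mathrm{Pic}(X)\simeq\mathbb{Z}^{\oplus m_1+\cdots+m_n}$ as an abstract group. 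Fortunately, the only consequence you draw from this — that the minimal exponents $c_{i,j}$ for the basis elements $\pi_i^*\mathcal{L}_{i,j}$ on $X$ agree with those on $X_i$, and that the $\mathcal{J}_{i,j}$ are pullbacks — is true, but it should be justified differently: $(\pi_i^*\mathcal{L}_{i,j})^{\otimes c}$ descends to $X$ if and only if $c\cdot\delta_X(\pi_i^*\mathcal{L}_{i,j})=0$ in $\mathrm{Br}(k)$, and by your projection-formula identification $\delta_X(\pi_i^*\mathcal{L}_{i,j})=[\mathrm{End}(\mathcal{M}_{\mathcal{L}_{i,j}})]=\delta_{X_i}(\mathcal{L}_{i,j})$, so the orders (hence the $c$'s) coincide and uniqueness of descent makes the $\mathcal{J}_{i,j}$ pullbacks. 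With that repair, which uses only tools you already set up, your argument goes through and Theorem 1.1 applied to $X$ yields the corollary.
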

\noindent
Theorem 1.1 from above has also a motivic consequence. Noncommutative motives are by construction closely related to semiorthogonal decompositions. In the last decades, the bounded derived category $D^b(X)$ of coherent sheaves on a smooth projective variety $X$ has been recognized as an interesting invariant, encoding a lot of geometric information. For instance, there are links between the semiorthogonal decomposition of $D^b(X)$ and the birational geometry of $X$ (see for instance \cite{KUS}, \cite{AB1S}, \cite{ABS}, \cite{NO2Z}, \cite{NO3T} and references therein). From a motivic point of view, it is quite natural to ask how birational geometry of a given variety $X$ is detected by its noncommutative motive. And indeed, there are results in this direction for (generalized) Brauer--Severi varieties \cite{TAZ} and \cite{TA2S}. In the present paper we want to consider twisted flags and shed some light to the case of arbitrary proper $k$-schemes admitting a certain type of semiorthogonal decomposition. Our main results are Theorems 1.4 and 1.7.

Recall from the book \cite{GTAZ} that the category $\textbf{dgcat}$ of small dg categories with dg functors carries a Quillen model structure whose weak equivalences are Morita equivalences. Denote by $\mathrm{Hmo}$ the homotopy category obtained from the Quillen model structure and by $\mathrm{Hmo}_0$ its additivization. To any small dg category $\mathcal{A}$ one can associate functorially its \emph{noncommutative motive} $U(\mathcal{A})$ which takes values in $\mathrm{Hmo}_0$. This functor $U\colon \textbf{dgcat}\rightarrow \mathrm{Hmo}_0$ is an \emph{universal additive invariant}. Recall that an universal additive invariant is any functor $E\colon \textbf{dgcat}\rightarrow D$ taking values in an additive category $D$ such that\\ 
\begin{itemize}
	\item[(\textbf{i})] it sends derived Morita equivalences to isomorphisms,\\
	
	\item[(\textbf{ii})] for any pre-triangulated dg category $\mathcal{A}$ admitting full pre-triangulated dg subcategories $\mathcal{B}$ and $\mathcal{C}$ such that $H^0(\mathcal{A})=\langle H^0(\mathcal{B}), H^0(\mathcal{C})\rangle$ is a semiorthogonal decomposition, the morphism $E(\mathcal{B})\oplus E(\mathcal{C})\rightarrow E(\mathcal{A})$ induced by the inclusions is an isomorphism.\\
\end{itemize}
\noindent
A source of examples for dg categories is provided by schemes since the derived category of perfect complexes $\mathrm{perf}(X)$ of any quasi-projective scheme $X$ admits a canonical (unique) dg enhancement $\mathrm{perf}_{dg}(X)$. In \cite{TAZ} it is proved that if two Brauer--Severi varieties $X$ and $Y$ (see Section 2 for a definition) are birational, then $U(\mathrm{perf}_{dg}(X))=U(\mathrm{perf}_{dg}(Y))$. In view of the Amitsur conjecture for central simple algebras (two Brauer--Severi varieties $X$ and $Y$ are birational if and only if the corresponding central simple algebras $A$ and $B$ generate the same subgroup in $\mathrm{Br}(k)$), it is conjectured in \emph{loc.cite} that $U$ is actually a complete birational invariant for Brauer--Severi varieties. As a Brauer--Severi variety is a special case of a twisted flag, Theorem 1.4 from below is a generalization. 

We fix some notation:  Let $X$ be a twisted flag as in Corollary 1.2 and denote by $A_g$ the central simple division algebra corresponding to $g\in\mathrm{Am}(X)$. Analogously, let $B_h$ denote the central simple division algebra corresponding to $h\in\mathrm{Am}(Y)$. Set $M_X:=\bigoplus _{g\in \mathrm{Am}(X)}U(A_{g})$ and $M_Y:=\bigoplus_{h\in\mathrm{Am}(Y)}U(B_{h})$. Note that these sums are finite according to Theorem 1.1. Furthermore, let $\mathrm{Sep}(k)$ be the full subcategory of the category of noncommutative Chow motives (see \cite{TAZ} for a definition) consisting of objects $U(F)$ with $F$ a separable $k$-algebra. Now let $\mathrm{CSA}(k)$ be the full subcategory of $\mathrm{Sep}(k)$ consisting of objects $U(A)$ with $A$ a central simple $k$-algebra (see Section 2 for a definition of central simple algebra) and denote by $\mathrm{CSA}(k)^{\oplus}$ its closure under finite sums. It is an additive symmetric monoidal subcategory. We write shortly $U(X)$ for $U(\mathrm{perf}_{dg}(X))$.

Let $G$, $P$ and $\gamma$ be as above and let $\rho_1,...,\rho_n$ be a $\mathrm{Ch}$-homogeneous basis of $R(P)$ over $R(G)$ (see \cite{PAS},\S2), where $R(P)$ and $R(G)$ denote the corresponding representation rings. Let $A_{\chi(i),\gamma}$ be the Tits central simple algebras associated to $\rho_i$ (see Section 3 for a definition) and let $\mathrm{Ti}(X):=\langle A_{\chi(1),\gamma},...,A_{\chi(n),\gamma} \rangle$ be the subgroup of $\mathrm{Br}(k)$ generated by these Tits algebras. Denote by 
\begin{eqnarray*}
	M\mathrm{Ti}(X):=\bigoplus_{f\in \mathrm{Ti}(X)}U(A_f),
\end{eqnarray*}
where $A_f$ are the central simple division algebras corresponding to $f$.

\begin{thm}
	Let $X$ be a twisted flags as in Corollary 1.2. Then there are direct summands $N,N'\in \mathrm{CSA}(k)^{\oplus}$ of $	M\mathrm{Ti}(X)$ such that $M_X\oplus N=U(X)\oplus N'$.
\end{thm}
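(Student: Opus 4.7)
The plan is to combine a categorical version of Panin's decomposition of $\mathrm{perf}(X)$ with a direct bookkeeping of multiplicities. For a twisted flag $X={_\gamma}(G/P)$ with $G$ simply connected semisimple, Panin's theorem (together with its dg lift available in the literature) produces a semiorthogonal decomposition
\begin{eqnarray*}
\mathrm{perf}(X)=\langle \mathrm{perf}(A_{\chi(1),\gamma}),\ldots,\mathrm{perf}(A_{\chi(n),\gamma})\rangle
\end{eqnarray*}
indexed by the $\mathrm{Ch}$-homogeneous basis $\rho_1,\ldots,\rho_n$ of $R(P)$ over $R(G)$. Taking dg enhancements and applying property (\textbf{ii}) of the universal additive invariant $U$ gives $U(X)\simeq \bigoplus_{i=1}^n U(A_{\chi(i),\gamma})$ in $\mathrm{Hmo}_0$.

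Next I would use Morita invariance (property (\textbf{i})) to replace each $A_{\chi(i),\gamma}$ by the underlying division algebra $A_{f_i}$ associated to its Brauer class $f_i:=[A_{\chi(i),\gamma}]\in\mathrm{Ti}(X)$, so that $U(X)\simeq\bigoplus_{i=1}^n U(A_{f_i})$ is a direct sum of summands of $M\mathrm{Ti}(X)$ with certain multiplicities. By Corollary 1.2 the Amitsur subgroup $\mathrm{Am}(X)$ is generated by a subset of the Tits classes $\{[A_{\chi(i),\gamma}]\}_i$, hence $\mathrm{Am}(X)\subseteq \mathrm{Ti}(X)$; therefore $M_X=\bigoplus_{g\in\mathrm{Am}(X)}U(A_g)$ is also built from summands of $M\mathrm{Ti}(X)$, each occurring with multiplicity one.

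It then remains to balance the two decompositions. For each $f\in\mathrm{Ti}(X)$ set $\mu_f:=\#\{i:f_i=f\}$, $\nu_f:=\mathbf{1}_{f\in\mathrm{Am}(X)}$, and $M_f:=\max(\mu_f,\nu_f)$. Defining
\begin{eqnarray*}
N:=\bigoplus_{f\in\mathrm{Ti}(X)}(M_f-\nu_f)\cdot U(A_f),\qquad N':=\bigoplus_{f\in\mathrm{Ti}(X)}(M_f-\mu_f)\cdot U(A_f),
\end{eqnarray*}
one checks $M_X\oplus N\simeq \bigoplus_{f\in\mathrm{Ti}(X)}M_f\cdot U(A_f)\simeq U(X)\oplus N'$, while both $N$ and $N'$ lie in $\mathrm{CSA}(k)^{\oplus}$ and consist only of summands appearing in $M\mathrm{Ti}(X)$, as required.

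The main obstacle is not the multiplicity matching (which is purely formal once the decomposition is in hand) but securing a categorical semiorthogonal decomposition of $\mathrm{perf}(X)$ whose components are $\mathrm{perf}(A_{\chi(i),\gamma})$: Panin's original statement is at the level of $K_0$, so one must either cite the dg enhancement of this result or, alternatively, construct the decomposition directly from the $AS$-bundle classification of Theorem 4.5 by observing that the endomorphism algebras of the indecomposable $AS$-bundles on $X$ are Morita equivalent to the Tits algebras $A_{\chi(i),\gamma}$.
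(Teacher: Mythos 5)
Your proposal is correct and follows essentially the paper's own route: the paper obtains $U(X)\simeq U(A_{\chi(1),\gamma})\oplus\cdots\oplus U(A_{\chi(n),\gamma})$ by citing Tabuada's theorem on additive invariants of twisted projective homogeneous varieties (\cite{TA1S}, Theorem 2.1), which is precisely the motivic lift of Panin's $K_0$-decomposition that you flag as the main obstacle, and then uses $\mathrm{Am}(X)\subseteq \mathrm{Ti}(X)$ to compare $M_X$ and $U(X)$ inside $M\mathrm{Ti}(X)$. Your explicit max-multiplicity bookkeeping for $N$ and $N'$ is in fact more careful than the paper's assertion that $U(X)$ and $M_X$ are direct summands of $M\mathrm{Ti}(X)$ with $N,N'$ their complements (which, read literally, is delicate when a Brauer class occurs with multiplicity greater than one in $U(X)$, as in Remark 1.5), so your balancing step is a refinement of, not a departure from, the same argument.
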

\begin{rema}
	\textnormal{In the case of a Brauer--Severi variety $X$ corresponding to a central simple algebra of period $m$, one has $M_X=U(k)\oplus U(A)\oplus\cdots \oplus U(A^{\otimes m-1})$. Note that the period $m$ divides the degree $n$. So if $m\cdot r=n$, we get $M_X^{\oplus r}=U(\mathrm{perf}(X))$ (see Example 6.1 for details). In this case we have $N=M_X^{\oplus (r-1)}$ and $N'=0$. With the help of Corollary 1.6 we get back \cite{TAZ}, Proposition 3.15 (see p.14 for detailed explanation).}
\end{rema}
\begin{cor}
	Let $X$ and $Y$ be twisted flags as in Corollary 1.2 and let $N,N'$ be the direct summands of $M\mathrm{Ti}(X)$ and $Q,Q'$ of $M\mathrm{Ti}(Y)$ obtained from Theorem 1.4. If $X$ and $Y$ are birational, then $M_X\simeq M_Y$ and $U(X)\oplus N'\oplus Q\simeq U(Y)\oplus Q'\oplus N$. 
\end{cor}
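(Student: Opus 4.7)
The plan is to reduce the statement to Theorem 1.4 via the birational invariance of the Amitsur subgroup.

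First, I would invoke Liedtke's theorem from \cite{LIT} stating that $\mathrm{Am}(-)$ is a birational invariant on smooth proper $k$-varieties. Since twisted flags are smooth and proper, and $X$ is birational to $Y$, this gives $\mathrm{Am}(X)=\mathrm{Am}(Y)$ as subgroups of $\mathrm{Br}(k)$. A Brauer class uniquely determines its underlying central simple division algebra up to isomorphism, so the families $\{A_g\}_{g\in\mathrm{Am}(X)}$ and $\{B_h\}_{h\in\mathrm{Am}(Y)}$ coincide. From the definitions $M_X=\bigoplus_{g}U(A_g)$ and $M_Y=\bigoplus_{h}U(B_h)$ I immediately conclude $M_X\simeq M_Y$ in $\mathrm{CSA}(k)^{\oplus}$, which establishes the first claim.

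For the second claim, I would apply Theorem 1.4 to both $X$ and $Y$ to obtain
\[
M_X \oplus N \simeq U(X) \oplus N', \qquad M_Y \oplus Q \simeq U(Y) \oplus Q'.
\]
Adding $Q$ to both sides of the first isomorphism, adding $N$ to both sides of the second, and inserting the isomorphism $M_X\simeq M_Y$ from the previous step, I obtain
\[
U(X)\oplus N' \oplus Q \;\simeq\; M_X \oplus N \oplus Q \;\simeq\; M_Y \oplus N \oplus Q \;\simeq\; U(Y)\oplus Q' \oplus N,
\]
which is precisely the required identity.

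I do not anticipate a real obstacle here: the entire argument is a formal manipulation in the additive symmetric monoidal category $\mathrm{CSA}(k)^{\oplus}$ once Theorem 1.4 is available. The only point deserving explicit mention is that $M_X$ genuinely depends only on the subgroup $\mathrm{Am}(X)\subseteq \mathrm{Br}(k)$ and not on further auxiliary choices, so that Liedtke's birational invariance of $\mathrm{Am}$ transports directly to an isomorphism at the level of noncommutative motives. The rest is bookkeeping.
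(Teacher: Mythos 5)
Your proof is correct and follows essentially the same route as the paper: Liedtke's birational invariance of the Amitsur subgroup gives $\mathrm{Am}(X)=\mathrm{Am}(Y)$, hence $M_X\simeq M_Y$, and the rest is the formal bookkeeping with the summands $N,N',Q,Q'$ supplied by Theorem 1.4. The only (harmless) difference is that you deduce $M_X\simeq M_Y$ directly from the uniqueness of the division algebra representing each Brauer class, whereas the paper routes this step through the structure theorem for finitely generated abelian groups and \cite{TAZ}, Theorem 2.19.
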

Using the theory of semiorthogonal decompositions one can try to generalize Corollary 1.6 for arbitrary proper and geometrically integral $k$-schemes. For the definition of w-exceptional objects and semiorthogonal decompositions we refer to p.15. Below, $D^b(X)$ denotes the bounded derived category of coherent sheaves on $X$ and $D^b(X)=\langle \mathcal{D}_1,...,\mathcal{D}_m\rangle$ a semiorthogonal decomposition. We want to call a smooth, proper and geomerically integral $k$-scheme $X$ a scheme of \emph{pure weak exceptional type} if it satisfies the following conditions:\\
\begin{itemize}
	\item[(\textbf{i})] $D^b(X)=\langle \mathcal{E}_1,...,\mathcal{E}_m\rangle$ is a semiorthogonal decomposition induced by a full w-exceptional collection,\\
	
	\item[(\textbf{ii})]  The bundles $\mathcal{E}_i$ be pure of type $\mathcal{K}_i$. Furthermore, $\mathrm{End}(\mathcal{K}_i)\simeq k^s$ and some of these $\mathcal{K}_i$ form a basis of $\mathrm{Pic}(X_s)$.\\
\end{itemize}
Note that if $\mathcal{E}_i$ is pure of type $\mathcal{K}_i$ with $\mathrm{End}(\mathcal{K}_i)\simeq k^s$, the base change of the semiorthogonal decomposition (i) actually implies that $D^b(X_s)=\langle\mathcal{K}_1,...,\mathcal{K}_m\rangle$ is a full exceptional collection (see p.15 for a definition) and hence $\mathrm{Pic}(X_s)\simeq \mathbb{Z}^{\oplus m}$. Indeed, there are schemes satisfying these two conditions. For instance (generalized) Brauer-Severi varieties or certain involution varieties. Conjecturally, all twisted flags are of pure weak exceptional type. It is interesting to investigate whether twisted flags are characterized by (i) and (ii). For a result in this direction see \cite{NO5}, Theorem 1.2. One can show that schemes $X$ of pure weak exceptional type satisfy the assumptions of Theorem 4.5 (see proof of Theorem 1.7). Therefore, we can classify indecomposable $AS$-bundles on such $X$.

Now let $D_X$ denote the set of indecopmposable $AS$-bundles on a scheme of pure weak exceptional type. Note that for any $AS$-bundle $\mathcal{E}$ on $X$ satisfying the assumptions of Theorem 4.5 one has $[\mathrm{End}(\mathcal{E})]\in \mathrm{Br}(k)$ (see Proposition 4.7). From Theorem 4.5 and Proposition 4.7 we conclude that there are only finitely many Brauer-classes $[\mathrm{End}(\mathcal{E})]\in \mathrm{Br}(k)$ of indecomposable $AS$-bundles. 
Now let $C_X\subset \mathrm{Br}(k)$ be the subgroup generated by these finitely many Brauer-classes. Denote by $A(g)$ the central simple division algebra in $C_X$ corresponding to $g$. Set 
\begin{eqnarray*}
	M_{T(X)}:=\bigoplus _{g\in C_X}U(A(g))
\end{eqnarray*}
\begin{thm}
Let $X$ and $Y$ be schemes of pure weak exceptional type. 
If $X$ and $Y$ are birational, then there are direct summands $N,N'\in \mathrm{CSA}^{\oplus }$ of $M_{T(X)}$ and $Q,Q'\in \mathrm{CSA}^{\oplus }$ of $M_{T(Y)}$ such that $U(X)\oplus N'\oplus Q\simeq U(Y)\oplus Q'\oplus N$.
\end{thm}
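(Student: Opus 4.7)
The plan is to reduce Theorem 1.7 to Liedtke's birational invariance of the Amitsur subgroup by exploiting the dg-enhanced semiorthogonal decomposition coming from the w-exceptional collection. First I would verify that a scheme $X$ of pure weak exceptional type satisfies the hypotheses of Theorems 1.1 and 4.5. Indeed, condition (ii) gives $\mathrm{Pic}(X_s)\simeq \mathbb{Z}^{\oplus m}$ and provides indecomposable pure bundles $\mathcal{M}_{\mathcal{L}_i}$ of basis type. Next I would establish the key identification $C_X=\mathrm{Am}(X)$. By Theorem 4.5, every indecomposable $AS$-bundle has the form $\mathcal{J}_1^{\otimes b_1}\otimes\cdots\otimes\mathcal{J}_m^{\otimes b_m}\otimes \mathcal{M}_{(a_1,\ldots,a_m)}$; tensoring with line bundles preserves the Brauer class of the endomorphism algebra, so $[\mathrm{End}(\mathcal{E})]=[\mathrm{End}(\mathcal{M}_{(a_1,\ldots,a_m)})]$, which decomposes in $\mathrm{Br}(k)$ as $\sum a_i[\mathrm{End}(\mathcal{M}_{\mathcal{L}_i})]$ and therefore lies in $\mathrm{Am}(X)$ by Theorem 1.1. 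Conversely, the generators furnished by Theorem 1.1 are themselves of the form $[\mathrm{End}(\mathcal{M}_{\mathcal{L}_i})]\in C_X$, yielding the reverse inclusion.

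Second, I would exploit the full w-exceptional collection $\mathcal{E}_1,\ldots,\mathcal{E}_m$ on $X$. Its dg enhancement induces a semiorthogonal decomposition $\mathrm{perf}_{dg}(X)=\langle \mathcal{A}_1,\ldots,\mathcal{A}_m\rangle$ with each $\mathcal{A}_i$ Morita equivalent to $\mathrm{End}(\mathcal{E}_i)$ viewed as a dg algebra concentrated in degree zero. The universal additive invariant property of $U$ then gives
\[
U(X)\;\simeq\;\bigoplus_{i=1}^{m} U(\mathrm{End}(\mathcal{E}_i)).
\]
Since each $\mathcal{E}_i$ is pure of line-bundle type, it is an indecomposable $AS$-bundle, so $g_i:=[\mathrm{End}(\mathcal{E}_i)]\in C_X$, and Morita invariance of $U$ gives $U(\mathrm{End}(\mathcal{E}_i))\simeq U(A(g_i))$. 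Consequently $U(X)$ is realised as a finite direct sum of summands $U(A(g))$ with $g\in C_X$, and the same holds for $U(Y)$.

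Third, by Liedtke's result $\mathrm{Am}$ is a birational invariant of smooth proper varieties, so $C_X=\mathrm{Am}(X)=\mathrm{Am}(Y)=C_Y$, whence $M_{T(X)}\simeq M_{T(Y)}$. To assemble the required isomorphism I choose $N'\in\mathrm{CSA}^{\oplus}$ consisting of the $U(A(g))$ missing or underrepresented in the decomposition of $U(X)$ and an integer $a\geq 1$ so that $U(X)\oplus N'\simeq M_{T(X)}^{\oplus a}$; analogously I find $Q'$ and $b\geq 1$ with $U(Y)\oplus Q'\simeq M_{T(Y)}^{\oplus b}$. Setting $N:=M_{T(X)}^{\oplus b}$ and $Q:=M_{T(Y)}^{\oplus a}$ and using $M_{T(X)}\simeq M_{T(Y)}$, one obtains
\[
U(X)\oplus N'\oplus Q \;\simeq\; M_{T(X)}^{\oplus(a+b)} \;\simeq\; M_{T(Y)}^{\oplus(a+b)} \;\simeq\; U(Y)\oplus Q'\oplus N.
\]
The main obstacle is the bookkeeping of multiplicities required to guarantee that $N,N',Q,Q'$ can legitimately be described as direct summands of $M_{T(X)}$ and $M_{T(Y)}$ in the sense of the statement; in the generic situation where each Brauer class appears at most once among the $g_i$ (as happens for twisted flags in Theorem 1.4) one has $a=b=1$ and $N'$, $Q'$ are literal sub-sums, while the multiplicity-filling scheme above handles the general case.
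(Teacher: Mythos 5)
Your overall strategy coincides with the paper's: verify that pure weak exceptional type yields the hypotheses of Theorem 4.5, use additivity of $U$ over the full w-exceptional collection to get $U(X)\simeq\bigoplus_i U(\mathrm{End}(\mathcal{E}_i))$ with each $\mathrm{End}(\mathcal{E}_i)$ central simple, and invoke Liedtke's birational invariance of $\mathrm{Am}$. You deviate in two places. First, you prove the identification $C_X=\mathrm{Am}(X)$ (via Lemma 4.6(iii)/Proposition 4.8 together with Theorem 1.1); the paper only uses the inclusion $\mathrm{Am}(X)\subseteq C_X$, introduces the auxiliary motives $M_X=\bigoplus_{p\in\mathrm{Am}(X)}U(A_p)$ and $M_Y$, writes $M_{T(X)}\simeq M_X\oplus N$ and $M_{T(Y)}\simeq M_Y\oplus Q$, and only then uses birationality to get $M_X\simeq M_Y$. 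Your sharper identification is correct and makes that detour unnecessary, giving $M_{T(X)}\simeq M_{T(Y)}$ directly. Second, the assembly: the paper asserts that $U(X)$ is a direct summand of $M_{T(X)}$, so that $U(X)\oplus N'\simeq M_{T(X)}$ with $N'$ a summand of $M_{T(X)}$; you instead pad with powers, $U(X)\oplus N'\simeq M_{T(X)}^{\oplus a}$. Your version handles repeated Brauer classes among the $\mathrm{End}(\mathcal{E}_i)$ (which do occur, e.g.\ for a Brauer--Severi variety of a non-division algebra, where in fact $M_{T(X)}$ is a summand of $U(X)$ rather than conversely), whereas the paper's ``obviously'' tacitly assumes the classes $[\mathrm{End}(\mathcal{E}_i)]$ are pairwise distinct. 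The price is that your $N,N',Q,Q'$ are summands of powers of $M_{T(X)},M_{T(Y)}$, not of a single copy, so you obtain the displayed isomorphism with $N,N',Q,Q'\in\mathrm{CSA}(k)^{\oplus}$ but not the letter of the statement (``direct summands of $M_{T(X)}$, $M_{T(Y)}$''), a discrepancy you flag yourself and which reflects an imprecision already present in the paper's own proof rather than a defect peculiar to your argument. Finally, fix the index swap at the end: with $U(X)\oplus N'\simeq M_{T(X)}^{\oplus a}$ and $U(Y)\oplus Q'\simeq M_{T(Y)}^{\oplus b}$ you must take $N=M_{T(X)}^{\oplus a}$ and $Q=M_{T(Y)}^{\oplus b}$ (not the other way around) so that both sides become $M_{T(X)}^{\oplus(a+b)}$.
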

\begin{cor}
	Let $X$ and $Y$ be as in Theorem 1.7. Assume $X$ and $Y$ are birational and let $A_i, 1\leq i\leq n$ and $B_j, 1\leq j\leq m$ be the central simple algebras occuring in $U(X)\oplus N'\oplus Q$ and $U(Y)\oplus Q'\oplus N$ respectively, then $\langle [A_i]\rangle=\langle[B_j]\rangle$ in $\mathrm{Br}(k)$.\\
\end{cor}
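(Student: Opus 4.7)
The plan is to deduce Corollary 1.8 from Theorem 1.7 by exploiting the Krull--Schmidt type behaviour of the category $\mathrm{CSA}(k)^{\oplus}$ of noncommutative Chow motives built from central simple $k$-algebras. First I would argue that both sides of the isomorphism supplied by Theorem 1.7 actually live in $\mathrm{CSA}(k)^{\oplus}$: for a scheme $X$ of pure weak exceptional type the full w-exceptional collection provides a semiorthogonal decomposition $D^b(X)=\langle \mathcal{E}_1,\dots,\mathcal{E}_m\rangle$, and additivity of $U$ yields $U(X)\simeq \bigoplus_{i} U(\mathrm{End}(\mathcal{E}_i))$, with each $\mathrm{End}(\mathcal{E}_i)$ central simple by Proposition 4.7. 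Combined with $N,N',Q,Q'\in \mathrm{CSA}(k)^{\oplus}$, Theorem 1.7 then gives an isomorphism
\[ U(X)\oplus N'\oplus Q \simeq U(Y)\oplus Q'\oplus N \]
inside $\mathrm{CSA}(k)^{\oplus}$.

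Next I would reduce every summand $U(A)$ to $U(D_A)$ with $D_A$ the division $k$-algebra Morita equivalent to $A$, using that $U$ inverts Morita equivalences. The key facts, to be extracted from \cite{TAZ}, are that the indecomposable objects of $\mathrm{CSA}(k)^{\oplus}$ are precisely the $U(D)$ for $D$ a central simple division $k$-algebra, that these have local endomorphism rings, and that $U(D)\simeq U(D')$ iff $[D]=[D']$ in $\mathrm{Br}(k)$. The Krull--Schmidt theorem then forces the multisets of indecomposable summands on the two sides of the above isomorphism to agree, so the sets of Brauer classes $\{[A_i]\}_i$ and $\{[B_j]\}_j$ coincide inside $\mathrm{Br}(k)$, whence $\langle [A_i]\rangle=\langle [B_j]\rangle$ is immediate.

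The main obstacle I expect is the Krull--Schmidt step itself, i.e.~verifying that $\mathrm{CSA}(k)^{\oplus}$ is Krull--Schmidt and that its indecomposables are correctly parametrised by Brauer classes; once this is in hand the deduction is purely formal. A secondary subtlety is to make sure that the \emph{algebras occurring} in $U(X)\oplus N'\oplus Q$ in the sense of the statement really are the division algebras read off from the indecomposable summands up to Morita equivalence, which is exactly what the reduction step accomplishes, using the classification of indecomposable $AS$-bundles and their endomorphism algebras supplied by Theorem 4.5 and Proposition 4.7.
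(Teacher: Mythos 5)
Your first step (everything lands in $\mathrm{CSA}(k)^{\oplus}$, with $U(X)\simeq\bigoplus_i U(\mathrm{End}(\mathcal{E}_i))$ and the endomorphism algebras central simple) is fine, but the key step of your argument --- the Krull--Schmidt claim --- fails. In the category of noncommutative Chow motives one has $\mathrm{End}(U(D))\simeq K_0(D^{\mathrm{op}}\otimes_k D)\simeq \mathbb{Z}$ for a central division algebra $D$; this ring has no nontrivial idempotents, so $U(D)$ is indeed indecomposable, but it is \emph{not} local, and $\mathrm{CSA}(k)^{\oplus}$ is not a Krull--Schmidt category. Concretely, the criterion of \cite{TAZ}, Theorem 2.19 says that $\bigoplus_i U(A_i)\simeq\bigoplus_j U(B_j)$ holds if and only if the two families have the same length and, \emph{for each prime $p$ separately}, the $p$-primary components of the Brauer classes match up to a permutation $\sigma_p$ (which may depend on $p$). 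Thus for central division algebras $A,B$ of coprime periods one gets $U(A)\oplus U(B)\simeq U(D_{A\otimes B})\oplus U(k)$, a decomposition into pairwise non-isomorphic indecomposables; so your conclusion that ``the multisets of indecomposable summands on the two sides agree'' is false in general, and the intermediate claim that $\{[A_i]\}=\{[B_j]\}$ as multisets is strictly stronger than what is true.

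The statement you actually need is weaker and survives this failure: since every Brauer class is the sum of its $p$-primary components, the prime-by-prime matching already gives $\langle [A_i]\rangle=\langle [A_i]_p : i,p\rangle=\langle [B_j]_p : j,p\rangle=\langle [B_j]\rangle$. This is exactly the content of \cite{TA2S}, Corollary 4.8, and the paper's proof of Corollary 1.8 consists of invoking that result (applied to the isomorphism $U(X)\oplus N'\oplus Q\simeq U(Y)\oplus Q'\oplus N$ from Theorem 1.7). So to repair your write-up, replace the Krull--Schmidt/local-endomorphism argument by the matching theorem of \cite{TAZ}, Theorem 2.19 (or cite \cite{TA2S}, Corollary 4.8 directly), and keep your reduction from $U(A)$ to the Brauer-equivalent division algebra, which is unaffected.
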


\noindent{\small \textbf{Notations} Throughout the paper $k$ is an arbitrary field and $k^s$ a separable closure. For a variety/algebraic group over $k$, we write $X_s$ and $G_s$ for the base changes $X\otimes_k k^s$ and $G\otimes_k k^s$ respectively.}



\section{Examples of inner twisted flags}
Recall that a finite-dimensional $k$-algebra $A$ is called \emph{central simple} if it is an associative $k$-algebra that has no two-sided ideals other than $0$ and $A$ and if its center equals $k$. If the algebra $A$ is a division algebra it is called \emph{central division algebra}. Note that $A$ is a central simple $k$-algebra if and only if there is a finite field extension $k\subset L$, such that $A\otimes_k L \simeq M_n(L)$. This is also equivalent to $A\otimes_k \bar{k}\simeq M_n(\bar{k})$. An extension $k\subset L$ such that $A\otimes_k L\simeq M_n(L)$ is called splitting field for $A$. The \emph{degree} of a central simple algebra $A$ is defined to be $\mathrm{deg}(A):=\sqrt{\mathrm{dim}_k A}$. According to the \emph{Wedderburn Theorem}, for any central simple $k$-algebra $A$ there is an unique integer $n>0$ and a division $k$-algebra $D$ such that $A\simeq M_n(D)$. The division algebra $D$ is also central and unique up to isomorphism. The degree of the unique central division algebra $D$ is called the \emph{index} of $A$ and is denoted by $\mathrm{ind}(A)$. Two central simple algebras $A$ and $B$ are said to be \emph{Brauer-equivalent} if there are positive integers $r,s$ such that $M_r(A)\simeq M_s(B)$.

For a central simple $k$-algebra $A$, the inner twisted forms arising from $G=\mathrm{PGL}_1(A)$ can be described very explicitly. This will be done in the sequel. One of these inner twisted forms is the \emph{generalized Brauer--Severi variety}. 
So let $m\leq n$. The generalized Brauer--Severi variety $\mathrm{BS}(m,A)$ is defined to be the subset of $\mathrm{Grass}_k(mn,A)$ consisting of those subspaces of $A$ which are right ideals of dimension $m\cdot n$ (see \cite{KNU} or \cite{BLS}). Recall that $\mathrm{Grass}_k(mn,A)$ is given the structure of a projective variety via the Pl\"ucker embedding $\mathrm{Grass}_k(mn,A)\rightarrow \mathbb{P}(\wedge^{mn}(A))$.
This gives an embedding $\mathrm{BS}(m,A)\rightarrow \mathbb{P}(\wedge^{mn}(A))$ and a very ample line bundle $\mathcal{M}$ on $\mathrm{BS}(m,A)$.
Note that for any $\mathrm{BS}(m,A)$ there exists a finite Galois field extension $E$ of $k$ such that $\mathrm{BS}(m,A)\otimes_k E\simeq \mathrm{Grass}_E(mn,n^2)\simeq \mathrm{Grass}_E(m,n)$. The Picard group $\mathrm{Pic}(\mathrm{Grass}_E(m,n))$ is isomorphic to $\mathbb{Z}$ and has ample generator $\mathcal{O}(1)\simeq \mathrm{det}(\mathcal{Q})$ with $\mathcal{Q}$ being the universal quotient bundle on $\mathrm{Grass}_E(m,n)$. Recall that $\mathrm{Pic}(\mathrm{BS}(m,A))\simeq\mathbb{Z}$ and that it has a positive generator $\mathcal{L}$ such that $\mathcal{L}\otimes_k E\simeq \mathcal{O}(r)$ for a suitable $r>0$. Since $\mathrm{Pic}(\mathrm{BS}(m,A))$ is cyclic, we have $\mathcal{L}^{\otimes s}\simeq \mathcal{M}$ for a suitable $s>0$. Therefore, $\mathcal{L}$ is ample. From the definition of $\mathrm{BS}(m,A)$ it is clear that $\mathcal{L}$ is also very ample. If $m=1$, $\mathrm{BS}(1,A)$ is called \emph{Brauer--Severi variety}.


We also recall the basics of generalized Brauer--Severi schemes (see \cite{LSW}). Let $X$ be a noetherian $k$-scheme and $\mathcal{A}$ a sheaf of Azumaya algebras of rank $n^2$ over $X$ (see \cite{GRO}, \cite{GRO1} for details on Azumaya algebras). For an integer $1\leq m_1<n$ the generalized Brauer--Severi scheme $p:\mathrm{BS}(m_1,\mathcal{A})\rightarrow X$ is defined as the scheme representing the functor $F:\mathrm{Sch}/X\rightarrow \mathrm{Sets}$, where $(\psi:Y\rightarrow X )$ is mapped to the set of left ideals $\mathcal{J}$ of $\psi^*\mathcal{A}$ such that $\psi^*\mathcal{A}/\mathcal{J}$ is locally free of rank $n(n-m_1)$. By definition, there is an \'etale covering $U\rightarrow X$ and a locally free sheaf $\mathcal{E}$ of rank $n$ with the following trivializing diagram:
\begin{displaymath}
\begin{xy}
\xymatrix{
	\mathrm{Grass}(m_1,\mathcal{E}) \ar[r]^{\pi} \ar[d]_{q}    &   \mathrm{BS}(n_1,\mathcal{A}) \ar[d]^{p}                   \\
	U \ar[r]^{g}             &   X             
}
\end{xy}
\end{displaymath}
In the same way one defines the twisted relative flag $\mathrm{BS}(m_1,...,m_r,\mathcal{A})$ as the scheme representing the functor $F:\mathrm{Sch}/X\rightarrow \mathrm{Sets}$, where $(\psi:Y\rightarrow X )$ is mapped to the set of left ideals $\mathcal{J}_1\subset...\subset \mathcal{J}_r$ of $\psi^*\mathcal{A}$ such that $\psi^*\mathcal{A}/\mathcal{J}_i$ is locally free of rank $n(n-m_i)$. As for the generalized Brauer--Severi schemes, there is an \'etale covering $U\rightarrow X$ and a locally free sheaf $\mathcal{E}$ of rank $n$ with diagram
\begin{displaymath}
\begin{xy}
\xymatrix{
	\mathrm{Flag}_U(m_1,...,m_r,\mathcal{E}) \ar[r]^{\pi} \ar[d]_{q}    &   \mathrm{BS}(m_1,...,m_r,\mathcal{A}) \ar[d]^{p}                   \\
	U \ar[r]^{g}             &   X             
}
\end{xy}
\end{displaymath}
Note that the usual Brauer--Severi schemes are obtained from the generalized one by setting $m_1=1$. In this case one has a well known one-to-one correspondence between sheaves of Azumaya algebras of rank $n^2$ on $X$ and Brauer--Severi schemes of relative dimension $n-1$ via $\check{H}^1(X_{et}, \mathrm{PGL}_n)$ (see \cite{GRO}). Note that if the base scheme $X$ is a point a sheaf of Azumaya algebras on $X$ is a central simple $k$-algebra and the generalized Brauer--Severi schemes are the generalized Brauer--Severi varieties from above. Consider a twisted flag $X=\mathrm{SB}(m_1,...,m_r, A) \rightarrow \mathrm{Spec}(k)$. Such an $X$ is an \emph{inner form} of a partial flag variety.
That is, there is a cartesian square of the form
\begin{displaymath}
\begin{xy}
\xymatrix{
	\mathrm{Grass}_L(m_1,...,m_r,V) \ar[r]^{\pi} \ar[d]_{q}    &   \mathrm{BS}(m_1,...,m_r, A) \ar[d]^{p}                   \\
	\mathrm{Spec}(L) \ar[r]^{\pi}             &   \mathrm{Spec}(k)             
}
\end{xy}
\end{displaymath}
where $L/k$ is a Galois extension and the 1-cocycle
\begin{eqnarray*}
\mathrm{Gal}(L/k)\longrightarrow \mathrm{Aut}(\mathrm{Grass}_L(m_1,...,m_r,V))
\end{eqnarray*}
 factors through $\mathrm{PGL}(V)$.

\section{Tits algebras}
We refer to \cite{PAS}, Section 3.1 for details (see also \cite{KNU}, p.376-379).
 Now let $G$ be a simply connected semi-simple algebraic group over the field $k$ and $P$ a parabolic subgroup. We denote by $\widetilde{G}$ and $\widetilde{P}$ their universal covers. For the center $\widetilde{Z}\subset \widetilde{G}$ let $\mathrm{Ch}:=\mathrm{Hom}(\widetilde{Z},\mathbb{G}_m)$ be the character group. Furthermore, let $R(\widetilde{G})$ and $R(\widetilde{P})$ be the associated representation rings. Recall from \cite{PAS},\S2 that there exits a finite free $\mathrm{Ch}$-homogeneous basis of $R(\widetilde{P})$ over $R(\widetilde{G})$. Furthermore, let $\chi\in \mathrm{Ch}$ and denote by $\mathrm{Rep}_{k}^{\chi}(\tilde{G})$ the full subcategory of $\mathrm{Rep}_k(\tilde{G})$ consisting of those $V$ such that $\tilde{Z}$ acts on $V$ by $\chi$. Now for a Galois-invariant $\chi\in \mathrm{Ch}$, choose a non-trivial representation $V_{\chi}\in \mathrm{Rep}_{k}^{\chi}(\tilde{G})$. Put $A_{\chi}=\mathrm{End}(V_{\chi})$. Then $A_{\chi}$ is an $k$-algebra equipped with a $G$-action by $k$-algebra automorphism. Using a 1-cocycle $\gamma\colon \mathrm{Gal}(k^s|k)\rightarrow G(k^s)$ one gets a new $\mathrm{Gal}(k^s|k)$-action on $A_{\chi}\otimes_k k^s$ and hence a twisted form $A_{\chi, \gamma}$.
In this way, one obtains the \emph{Tits map} (see \cite{PAS}, \S3 or \cite{KNU}, p.377) $\beta_{\gamma}\colon \mathrm{Ch}^{\Gamma}\rightarrow \mathrm{Br}(k)$ which is a group homomorphism and assigns to each character $\chi\in \mathrm{Ch}^{\Gamma}$ a central simple algebra $A_{\chi,\gamma}\in \mathrm{Br}(k)$, called \emph{Tits algebra}. 

\noindent
\begin{exam}[Type $A_n$]
	\textnormal{Let $G=\mathrm{SL}_1(A)$ where $A$ is a central simple algebra of degree $n+1$. Then $\bar{G}=\mathrm{PGL}_1(A)$ and $\mathrm{Ch}(Z)=\mathbb{Z}/(n+1)\mathbb{Z}$ with trivial $\mathrm{Gal}(k^s|k)$-action. For any $i=0,1,...,n$, consider the representation $p_i\colon \bar{G}\rightarrow \mathrm{GL}_1(\lambda^iA)$, where $\lambda^iA$ are external powers of $A$. In the split case, the $i$-th exterior power representation are known to be minimal representations (see \cite{KNU}). Hence $A^{\otimes i}$ are the  Tits algebras for $G$. } 
\end{exam}
\begin{exam}[Type $C_n$]
	\textnormal{Let $G=\mathrm{Sp}(A,\sigma)$ where $A$ is a central simple algebra of degree $2n$ with symplectic involution $\sigma$. Then $\bar{G}=\mathrm{PGSp}(A,\sigma)$ and $\mathrm{Ch}(Z)=\mathbb{Z}/2\mathbb{Z}=\{0,\chi\}$. The embedding $\bar{G}\rightarrow \mathrm{GL}_1(A)$ is in the split case a minimal representation. Hence $A$ is the Tits algebra.}
\end{exam}
\noindent
A complete list of the (minimal) Tits algebras for the simple $k$-split algebraic groups of classical type can be found for instance in \cite{KNU}, p. 378-379.

\section{AS-bundles on proper $k$-schemes} 
\begin{defi}
	\textnormal{A vector bundle $\mathcal{E}$ on a proper $k$-scheme $X$ is called \emph{pure of type} $\mathcal{W}$ if there is an indecomposable vector bundle $\mathcal{W}$ on $X\otimes_k \bar{k}$ such that $\mathcal{E}\otimes_k {\bar{k}}\simeq \mathcal{W}^{\oplus m}$. }
\end{defi}
\noindent
Recall from \cite{NO} the following definition.
\begin{defi}
	\textnormal{Let $X$ be a $k$-scheme. A vector bundle $\mathcal{E}$ on $X$ is called \emph{absolutely split} (\emph{separably split}) if it splits after base change as a direct sum of invertible sheaves on $X\otimes_k \bar{k}$ (resp. $X\otimes_k k^{sep}$). For an absolutely split vector bundle we shortly write \emph{AS-bundle}.}
\end{defi}
\begin{prop}[\cite{NO}, Proposition 4.2]
	Let $X$ be a proper $k$-scheme and $\mathcal{E}$ a vector bundle on $X$. Then $\mathcal{E}$ is absolutely split if and only if it is separably split.
\end{prop}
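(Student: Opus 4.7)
The plan is to handle the two implications separately. One direction is immediate: if $\mathcal{E} \otimes_k k^s$ is a direct sum of line bundles on $X \otimes_k k^s$, then further base change to $X \otimes_k \bar{k}$ preserves the decomposition, so $\mathcal{E}$ is absolutely split. The nontrivial content is the converse, which in characteristic zero is vacuous since $k^s = \bar k$, so I would assume $\mathrm{char}(k) = p > 0$.

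First I would observe that a splitting $\mathcal{E} \otimes_k \bar k \simeq \bigoplus \mathcal{L}_i$ involves only finitely many line bundles, each defined over some finite subextension of $\bar k / k^s$. I would therefore choose a finite purely inseparable extension $L/k^s$ over which $\mathcal{E}$ already splits as a direct sum of line bundles, reducing the proof to the following statement: if $\mathcal{E}_s := \mathcal{E} \otimes_k k^s$ is such that $\mathcal{E}_s \otimes_{k^s} L$ is a direct sum of line bundles on $X_L$, then $\mathcal{E}_s$ itself splits as a direct sum of line bundles on $X_s$.

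Since $X$ is proper, endomorphism rings of coherent sheaves on $X_s$ are finite-dimensional $k^s$-algebras, and so the Krull--Schmidt theorem applies. I would decompose $\mathcal{E}_s = \bigoplus_j \mathcal{F}_j$ into indecomposables; by uniqueness of Krull--Schmidt applied after base change to $X_L$, each $\mathcal{F}_j \otimes_{k^s} L$ must then be a direct sum of line bundles. The problem reduces to the following key claim: an indecomposable bundle $\mathcal{F}$ on $X_s$ whose base change $\mathcal{F}_L$ splits into line bundles must itself be a line bundle.

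For the key claim, set $R := \mathrm{End}_{\mathcal{O}_{X_s}}(\mathcal{F})$, a finite-dimensional local $k^s$-algebra whose residue field $K$ is purely inseparable (hence separably closed) over $k^s$. The algebra $R \otimes_{k^s} L$ is the endomorphism ring of $\mathcal{F}_L$. The main algebraic input I would invoke is that purely inseparable base change preserves locality of finite-dimensional algebras over separably closed fields, equivalently, primitive idempotents remain primitive. Granting this, $R \otimes L$ is local, which forces all line-bundle summands of $\mathcal{F}_L$ to be mutually isomorphic (otherwise the projections onto non-isomorphic summands would yield non-trivial orthogonal idempotents in $R \otimes L$). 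Hence $\mathcal{F}_L \simeq \mathcal{L}^{\oplus r}$ and $R \otimes L \simeq M_r(L)$; locality of $M_r(L)$ forces $r = 1$, and therefore $\mathcal{F}$ is a line bundle on $X_s$. The main obstacle is precisely this invariance of locality under purely inseparable base change, which rests on the triviality of $\mathrm{Br}(k^s)$ together with the fact that any algebraic extension of a separably closed field is again separably closed.
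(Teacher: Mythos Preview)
The paper does not supply its own proof of this proposition; it is quoted verbatim from \cite{NO}, Proposition 4.2, and used as a black box. There is thus no internal argument against which to compare yours.

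Your outline is correct. The reduction to a finite purely inseparable extension $L/k^s$, the Krull--Schmidt decomposition of $\mathcal{E}_s$, and the key claim that an indecomposable $\mathcal{F}$ on $X_s$ with $\mathcal{F}_L$ a sum of line bundles must itself be a line bundle are all sound. One simplification: once you know $R\otimes_{k^s}L$ is local you may conclude directly that $\mathcal{F}_L$ is indecomposable (a local ring has no nontrivial idempotents), hence a single line bundle, so $\mathrm{rk}(\mathcal{F})=1$; the detour through ``all summands isomorphic, hence $R\otimes L\simeq M_r(L)$'' is unnecessary and tacitly assumes $\mathrm{End}(\mathcal{L})=L$, which need not hold for an arbitrary proper $k$-scheme. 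The algebraic core---that $R/\mathrm{rad}(R)$ is a commutative field purely inseparable over $k^s$ (using $\mathrm{Br}(k^s)=0$) and that its tensor product with $L$ over $k^s$ is local (purely inseparable morphisms are universal homeomorphisms on spectra)---is exactly right.
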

\begin{thm}[\cite{NO}, Theorem 4.6]
	Let $X$ be a proper $k$-scheme with $H^0(X,\mathcal{O}_X)=k$. Then the closed points of the Picard scheme $\mathrm{Pic}_{X/k}$ are in one-to-one correspondence with isomorphism classes of indecomposable $AS$-bundles on $X$.
\end{thm}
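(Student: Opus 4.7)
The plan is to build a bijection between closed points of $\mathrm{Pic}_{X/k}$ and isomorphism classes of indecomposable $AS$-bundles by identifying both with finite Galois orbits in $\mathrm{Pic}(X_s)$. Since $X$ is proper with $H^0(X,\mathcal{O}_X)=k$, the fppf Picard functor is representable by a scheme locally of finite type over $k$, so the residue field of any closed point is a finite separable extension of $k$ whose geometric points form a single orbit under $\Gamma=\mathrm{Gal}(k^s/k)$. Since $\mathrm{Pic}(\mathrm{Spec}(k^s))=0$, one has $\mathrm{Pic}_{X/k}(k^s)=\mathrm{Pic}(X_s)$, and the closed points of $\mathrm{Pic}_{X/k}$ match exactly the finite $\Gamma$-orbits on $\mathrm{Pic}(X_s)$.

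From an indecomposable $AS$-bundle $\mathcal{E}$ I read off an orbit as follows. Krull--Schmidt on the proper $k^s$-scheme $X_s$ yields $\mathcal{E}_s\simeq\bigoplus_{i}\mathcal{L}_i^{\oplus m_i}$ with pairwise non-isomorphic $\mathcal{L}_i\in\mathrm{Pic}(X_s)$. The group $\Gamma$ acts on $\mathcal{E}_s$ and, by uniqueness of Krull--Schmidt, must permute the isotypic components $\mathcal{L}_i^{\oplus m_i}$. Should $\{\mathcal{L}_i\}$ split into more than one $\Gamma$-orbit, or should the multiplicities within one orbit fail to coincide, one would obtain a non-trivial $\Gamma$-stable direct sum decomposition of $\mathcal{E}_s$; effective Galois descent for coherent sheaves would then produce a decomposition of $\mathcal{E}$ itself, contradicting indecomposability. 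Hence a unique orbit $O(\mathcal{E})$ emerges, well defined on isomorphism classes.

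Conversely, given a Galois orbit $O=\{\mathcal{L}_1,\ldots,\mathcal{L}_r\}$ with stabiliser $\Gamma_O$ and fixed field $L=(k^s)^{\Gamma_O}$, I would take the smallest $m\geq 1$ for which $(\mathcal{L}_1\oplus\cdots\oplus\mathcal{L}_r)^{\oplus m}$ carries a $\Gamma$-equivariant structure on $X_s$ (the obstruction for a single copy lives in $H^2(\Gamma_O,(k^s)^\times)\simeq\mathrm{Br}(L)$, and $m$ turns out to be the order of that class), and then descend to an $AS$-bundle $\mathcal{E}$ on $X$ by effective descent. An analysis of $\mathrm{End}(\mathcal{E})$, which becomes a central simple $L$-algebra whose Brauer class is precisely the obstruction, shows that $\mathcal{E}$ is indecomposable and recovers the orbit $O$; in particular distinct orbits produce non-isomorphic bundles.

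The hard part is exactly this descent step: one must identify the minimal descent multiplicity with the order of the obstruction class in $\mathrm{Br}(L)$, and one must rule out a further splitting of the descended bundle into smaller $AS$-bundles. Both points hinge on the structure of $\mathrm{End}(\mathcal{E})$ as a central simple algebra over $L$, the very mechanism underlying the results on pure bundles recalled in the introduction (cf.\ \cite{NO}, Propositions 3.4--3.6). Once the descent is controlled, the two constructions are manifestly mutually inverse, since the base change determines the orbit and uniqueness of indecomposable descent is enforced by Krull--Schmidt over $k$.
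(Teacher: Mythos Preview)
The paper does not prove this statement; it is quoted from \cite{NO}, Theorem~4.6, and used as a black box throughout. There is therefore no proof in the present paper to compare your proposal against.

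That said, your plan is the natural one and is presumably close in spirit to the argument in \cite{NO}: identify closed points of $\mathrm{Pic}_{X/k}$ with $\Gamma$-orbits on $\mathrm{Pic}(X_s)$, and match these with isomorphism classes of indecomposable $AS$-bundles via Krull--Schmidt and Galois descent. One caution: you assert that every closed point of $\mathrm{Pic}_{X/k}$ has finite \emph{separable} residue field, so that closed points biject with $\Gamma$-orbits on the $k^s$-points. This is automatic when $k$ is perfect or when $\mathrm{Pic}_{X/k}$ is \'etale over $k$ (as in all applications in this paper, where $\mathrm{Pic}(X_s)\simeq\mathbb{Z}^{\oplus m}$ forces $\mathrm{Pic}^0=0$), but in the stated generality it needs justification or an appeal to Proposition~4.3 to pass between $\bar{k}$ and $k^s$. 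Apart from this, your descent outline---obstruction in $\mathrm{Br}(L)$ for $L$ the field cut out by the stabiliser, minimal descent multiplicity equal to the index of that class, and indecomposability read off from $\mathrm{End}(\mathcal{E})$ being central simple over $L$---is correct and is exactly the mechanism underlying the pure-bundle results of \cite{NO} that the paper recalls in its introduction and in Lemma~4.6.
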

\noindent
In \cite{NO} the author classified all indecomposable $AS$-bundles on a proper $k$-scheme $X$ with cyclic Picard group. We want to generalize this result for the case $\mathrm{Pic}(X_s)\simeq \mathbb{Z}^{\oplus m}$. Theorem 4.5 below is interesting in its own right and can be used for instance to classify indecomposable $AS$-bundles on twisted flags. In the proof of Theorem 1.1 it is implicitely shown that twisted flags of classical type satisfy the assumption of Theorem 4.5. Note that the classification of indecomposable $AS$-bundles on the twisted flags under consideration is a vast generalization of the main theorem of \cite{BN}.

\noindent
Let us repeat the notations and facts from the introduction. $X$ is still a proper and geometrically integral $k$-scheme. From \cite{NO}, Proposition 3.4 it follows that $\mathrm{Pic}(X)$ is a subgroup of $\mathrm{Pic}(X_s)$. In particular, if $\mathrm{Pic}(X_s)\simeq \mathbb{Z}^{\oplus m}$, then $\mathrm{Pic}(X)\simeq r_1\mathbb{Z}\oplus \cdots \oplus r_m\mathbb{Z}$. 
Let us fix a basis $\mathcal{L}_1,...,\mathcal{L}_m$ of $\mathrm{Pic}(X_s)\simeq \mathbb{Z}^{\oplus m}$. 
Using a basis of $\mathrm{Pic}(X)$ and an easy computation from linear algebra involving matrices over the integers, one can show that there are line bundles $\mathcal{J}_i\in\mathrm{Pic}(X)$ 
satisfying $\mathcal{J}_i\otimes_k k^s\simeq \mathcal{L}_i^{\otimes c_i}$ for some integers $c_i\geq 1$. Now we choose the $\mathcal{J}_i$ such that the $c_i$ are minimal. According to \cite{NO}, Proposition 3.4 these line bundles $\mathcal{J}_i$ are unique up to isomorphism. 
Assume there are pure vector bundles $\mathcal{M}_i$ of type $\mathcal{L}_i\in \mathrm{Pic}(X_s)$. We know from \cite{NO}, Proposition 3.5 that the bundle $\mathcal{M}_i$ is unique up to isomorphism. We set $\mathcal{M}_{\mathcal{L}_i}:=\mathcal{M}_i$. It is easy to see that for any line bundle $\mathcal{L}_i^{\otimes a}\in \mathrm{Pic}(X_s)$ there is an indecomposable pure bundle of type $\mathcal{L}_i^{\otimes a}$. Indeed, let $s_i=\mathrm{rk}(\mathcal{M}_{\mathcal{L}_i})$ and consider $(\mathcal{L}_i^{\oplus s_i})^{\otimes a}\simeq (\mathcal{L}_i^{\otimes a})^{\oplus s_i^a}$. Then we get 
$\mathcal{M}_{\mathcal{L}_i}^{\otimes a}\otimes_k k^s\simeq (\mathcal{L}_i^{\oplus s_i})^{\otimes a}\simeq (\mathcal{L}_i^{\otimes a})^{\oplus s_i^a}$.

\noindent
Considering the Krull--Schmidt decomposition of $\mathcal{M}_{\mathcal{L}_i}^{\otimes a}$ and taking into account that all indecomposable direct summands are isomorphic (see \cite{NO}, proof of Proposition 3.6 and Remark 3.7), we get an, up to isomorphism, unique indecomposable vector bundle $\mathcal{M}_{\mathcal{L}_i^{\otimes a}}$ such that $\mathcal{M}_{\mathcal{L}_i^{\otimes a}}\otimes_k k^s\simeq (\mathcal{L}_i^{\otimes a})^{\oplus s_i(a)}$, where $s_i(a)=\mathrm{rank}(\mathcal{M}_{\mathcal{L}_i^{\otimes a}})$. 
Using Krull--Schmidt decomposition again, we can use our indecomposable vector bundles $\mathcal{M}_{\mathcal{L}_i^{\otimes a}}$ and take the tensor product of these to get an indecomposable vector bundle $\mathcal{M}_{(a_1,...,a_m)}$ of type $\mathcal{L}_1^{\otimes a_1}\otimes\cdots \otimes \mathcal{L}_m^{\otimes a_m}$. Again, the bundles $\mathcal{M}_{(a_1,...,a_m)}$ are unique up to isomorphism. 
\begin{thm} 
	Let $X$ be a proper, geometrically integral $k$-scheme with $\mathrm{Pic}(X_s)\simeq \mathbb{Z}^{\oplus m}$ and let $\mathcal{L}_1,...,\mathcal{L}_m\in \mathrm{Pic}(X_s)$ be the basis from above. Let $\mathcal{J}_i\in \mathrm{Pic}(X)\simeq \mathbb{Z}^{\oplus m}$ be up to isomorphism unique line bundles satisfying $\mathcal{J}_i\otimes_k k^s\simeq \mathcal{L}_i^{\otimes c_i}$ with $c_i$ being minimal. Assume there are indecomposable pure bundles $\mathcal{M}_{\mathcal{L}_i}$ of type $\mathcal{L}_i$. Then all indecomposable $AS$-bundles $\mathcal{E}$ are of the form
	\begin{eqnarray*}
		\mathcal{J}_1^{\otimes b_1}\otimes\cdots \otimes \mathcal{J}_m^{\otimes b_m} \otimes \mathcal{M}_{(a_1,...,a_m)}
	\end{eqnarray*}
	with unique $b_i\in\mathbb{Z}$ and $0\leq a_j\leq c_j-1$.
\end{thm}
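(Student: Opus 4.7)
The plan is to prove this in three steps: first verify that the displayed bundles are indecomposable $AS$-bundles, then show every indecomposable $AS$-bundle arises in this form via a division-with-remainder argument using the $\mathcal{J}_i$, and finally check uniqueness of the exponents.

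First I would show that each $\mathcal{F} = \mathcal{J}_1^{\otimes b_1}\otimes\cdots\otimes \mathcal{J}_m^{\otimes b_m}\otimes \mathcal{M}_{(a_1,\ldots,a_m)}$ is an indecomposable $AS$-bundle. After base change to $k^s$, the $\mathcal{J}_i$ become $\mathcal{L}_i^{\otimes b_i c_i}$ and $\mathcal{M}_{(a_1,\ldots,a_m)}$ becomes a direct sum of copies of $\mathcal{L}_1^{\otimes a_1}\otimes\cdots\otimes \mathcal{L}_m^{\otimes a_m}$ by construction, so $\mathcal{F}\otimes_k k^s$ is a direct sum of line bundles, i.e.\ $\mathcal{F}$ is an $AS$-bundle. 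Indecomposability of $\mathcal{F}$ is preserved because tensoring by line bundles on $X$ is an auto-equivalence, so $\mathcal{F}$ is indecomposable iff $\mathcal{M}_{(a_1,\ldots,a_m)}$ is, which holds by construction.

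For the converse (the heart of the argument), let $\mathcal{E}$ be an arbitrary indecomposable $AS$-bundle on $X$. After base change, $\mathcal{E}\otimes_k k^s$ is a direct sum of line bundles; since $\mathcal{E}$ is indecomposable, the Krull--Schmidt/Galois-descent arguments of \cite{NO}, Proposition 3.6 and Remark 3.7, force all these summands to be isomorphic to one line bundle $\mathcal{L}_1^{\otimes n_1}\otimes\cdots\otimes \mathcal{L}_m^{\otimes n_m}$, so $\mathcal{E}$ is pure of this type. Now perform Euclidean division $n_i = b_i c_i + a_i$ with $0\le a_i\le c_i-1$ and form
\begin{eqnarray*}
\mathcal{F}\;:=\;\mathcal{J}_1^{\otimes -b_1}\otimes\cdots\otimes \mathcal{J}_m^{\otimes -b_m}\otimes \mathcal{E},
\end{eqnarray*}
which is still an indecomposable vector bundle on $X$. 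Base-changing to $k^s$, the $\mathcal{J}_i^{\otimes -b_i}$ contribute $\mathcal{L}_i^{\otimes -b_i c_i}$, so $\mathcal{F}\otimes_k k^s$ is a direct sum of copies of $\mathcal{L}_1^{\otimes a_1}\otimes\cdots\otimes \mathcal{L}_m^{\otimes a_m}$, i.e.\ $\mathcal{F}$ is an indecomposable pure bundle of type $\mathcal{L}_1^{\otimes a_1}\otimes\cdots\otimes \mathcal{L}_m^{\otimes a_m}$. By the uniqueness (up to isomorphism) of indecomposable pure bundles of a given type from \cite{NO}, Proposition 3.5, one gets $\mathcal{F}\simeq \mathcal{M}_{(a_1,\ldots,a_m)}$, giving $\mathcal{E}\simeq \mathcal{J}_1^{\otimes b_1}\otimes\cdots\otimes \mathcal{J}_m^{\otimes b_m}\otimes \mathcal{M}_{(a_1,\ldots,a_m)}$ as required.

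Finally, uniqueness of the tuple $(b_1,\ldots,b_m;a_1,\ldots,a_m)$ follows by comparing types after base change: an isomorphism between two such presentations forces equality of the line bundle types $\mathcal{L}_1^{\otimes(b_1 c_1+a_1)}\otimes\cdots\otimes \mathcal{L}_m^{\otimes(b_m c_m+a_m)}$ in $\mathrm{Pic}(X_s)\simeq \mathbb{Z}^{\oplus m}$, and then the uniqueness of Euclidean division with the restriction $0\le a_i\le c_i-1$ pins down the $b_i$ and $a_i$ separately. The main obstacle is really the recognition step $\mathcal{F}\simeq \mathcal{M}_{(a_1,\ldots,a_m)}$, which relies essentially on the uniqueness of indecomposable pure bundles of prescribed type from \cite{NO}, Proposition 3.5; minimality of the $c_i$ is crucial here, for without it the remainder $a_i$ need not lie in the canonical range and the associated pure bundle would not be well defined.
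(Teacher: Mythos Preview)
Your argument is correct but proceeds along a different, more direct route than the paper's.

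The paper does not first reduce to the purity of an indecomposable $AS$-bundle. Instead it takes an arbitrary $AS$-bundle $\mathcal{E}$, replaces it by $\mathcal{E}^{\oplus d}$ with $d=\mathrm{lcm}\,(\mathrm{rk}\,\mathcal{M}_{(a_1,\ldots,a_m)})$, writes out the base change $\pi^*(\mathcal{E}^{\oplus d})$ explicitly as a direct sum of line bundles grouped by residues of the exponents modulo the $c_i$, and then matches this with $\pi^*\mathcal{R}$ for a hand-built bundle $\mathcal{R}$ assembled from the $\mathcal{J}_i$ and the $\mathcal{M}_{(p,q)}$. An appeal to \cite{NO}, Proposition~3.4 gives $\mathcal{E}^{\oplus d}\simeq \mathcal{R}$ on $X$, and Krull--Schmidt then forces every indecomposable summand of $\mathcal{E}$ to be of the stated form. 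Your approach, by contrast, observes right away that under the hypotheses the Galois action on $\mathrm{Pic}(X_s)$ is trivial (since the basis $\mathcal{L}_1,\ldots,\mathcal{L}_m$ consists of invariant classes), so the isotypic decomposition of $\mathcal{E}\otimes_k k^s$ descends and an indecomposable $AS$-bundle is automatically pure of a single type; Euclidean division by the $c_i$ and the uniqueness statement of \cite{NO}, Proposition~3.5 (equivalently Lemma~4.6(ii) here) then finish immediately. Your route is shorter and more conceptual, avoiding the $\oplus d$ trick and the heavy indexing; the paper's route is more explicit and yields the structure of an arbitrary $AS$-bundle along the way before specialising via Krull--Schmidt.

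One small remark on citations: the purity step you invoke is not quite what \cite{NO}, Proposition~3.6 and Remark~3.7 are used for in the paper (there they are applied to bundles already known to be pure). The cleanest justification for ``indecomposable $AS$-bundle $\Rightarrow$ pure'' in your setting is the observation above that the Galois action on $\mathrm{Pic}(X_s)$ is trivial, so the canonical isotypic decomposition of $\mathcal{E}\otimes_k k^s$ is Galois-stable and hence descends; alternatively one may appeal to Theorem~4.4 together with $\mathrm{Pic}_\Gamma(X_s)=\mathrm{Pic}(X_s)$. This is a matter of pointing to the right reference rather than a gap in the argument.
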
 
\begin{proof}
	Let $\mathcal{E}$ be an arbitrary, not necessarily indecomposable, $AS$-bundle and let $\pi:X\otimes_k k^s\rightarrow X$ the projection. By assumption, there are indecomposable pure vector bundles $\mathcal{M}_{\mathcal{L}_i}$ of type $\mathcal{L}_i$. Above we showed that there exist (up to isomorphism) unique indecomposable pure vector bundles of type $\mathcal{L}_i^{\otimes a}$ for all $a\in \mathbb{Z}$. 
	 Let $d=\mathrm{lcm}(\mathrm{rk}(\mathcal{M}_{(a_1,...,a_m)})$,  $0\leq a_j\leq c_j-1$, be the least common multiple and consider the vector bundle $\pi^*(\mathcal{E}^{\oplus d})$. Since $\mathcal{E}$ is an $AS$-bundle, the vector bundle $\mathcal{E}^{\oplus d}$ is an $AS$-bundle, too. Therefore $\pi^*(\mathcal{E}^{\oplus d})$ decomposes into a direct sum of invertible sheaves. Below we give the proof for $m=2$ to simplify the notation. So after reordering $(\mathrm{mod} \ r_1,\mathrm{mod} \ r_2)$ in lexicographical order, we find that $\pi^*(\mathcal{E}^{\oplus d})$ is isomorphic to the bundle
	\begin{eqnarray*}
		\left(\bigoplus(\mathcal{L}_1^{\otimes {s_{i_{0}}^{(1)}\cdot c_1+0}}\otimes \mathcal{L}_2^{\otimes {t_{i_{0}}^{(1)}\cdot c_2}+0})^{\oplus d}\right)\oplus \left(\bigoplus(\mathcal{L}_1^{\otimes {s_{i_{1}}^{(1)}\cdot c_1+0}}\otimes \mathcal{L}_2^{\otimes {t_{i_{1}}^{(1)}\cdot c_2}+1})^{\oplus d}\right)        \oplus\cdots \\
		\oplus \left(\bigoplus(\mathcal{L}_1^{\otimes {s_{i_{(c_2-1)}}^{(1)}\cdot c_1+0}}\otimes \mathcal{L}_2^{\otimes {t_{i_{(c_2-1)}}^{(1)}\cdot c_2}+(r_2-1)})^{\oplus d}\right)\oplus \cdots\\
		\oplus \left(\bigoplus(\mathcal{L}_1^{\otimes {s_{i_{(c_2-1)}}^{(c_1-1)}\cdot c_1+(c_1-1)}}\otimes \mathcal{L}_2^{\otimes {t_{i_{(c_2-1)}}^{(c_1-1)}\cdot c_2}+(c_2-1)})^{\oplus d}\right)
	\end{eqnarray*}
	By definition of $d$, there are $h_{(p,q)}$ such that $h_{(p,q)}\cdot\mathrm{rk}(\mathcal{M}_{(p,q)})=d$ for $0\leq p\leq c_1-1$ and $0\leq q\leq c_2-1$. Furthermore, the sheaves $\mathcal{M}_{(p,q)}$ satisfy
	\begin{eqnarray*}
	 \pi^*\mathcal{M}_{(p,q)}\simeq(\mathcal{L}_1^{\otimes p}\otimes \mathcal{L}_2^{\otimes q})^{\oplus d_{(p,q)}},  
	 \end{eqnarray*}
 where $d_{(p,q)}=\mathrm{rk}(\mathcal{M}_{(p,q)})$.
 Now for the direct summands 
	\begin{eqnarray*}
		\left(\mathcal{L}_1^{\otimes {s_{i_{m}}^{(l)}\cdot c_1+p}}\otimes \mathcal{L}_2^{\otimes {t_{i_{m}}^{(l)}\cdot c_2}+q}\right )^{\oplus d}
		\end{eqnarray*}	
	where $0\leq p\leq c_1-1$ and $0\leq q\leq c_2-1$, we have 
	\begin{eqnarray*}
	\left (\left(\mathcal{L}_1^{\otimes {s_{i_{m}}^{(l)}\cdot c_1+p}}\otimes \mathcal{L}_2^{\otimes {t_{i_{m}}^{(l)}\cdot c_2}+q}\right )^{\oplus d_{(p,q)}}\right )^{\oplus h_{(p,q)}}.	
	\end{eqnarray*} Considering the vector bundle $(\mathcal{J}_1^{\otimes  {s_{i_{m}}^{(l)}}}\otimes\mathcal{J}_2^{\otimes  {t_{i_{m}}^{(l)}}}\otimes \mathcal{M}_{(p,q)})^{\oplus h_{(p,q)}}$ on $X$, we find
	\begin{eqnarray*}
		\pi^*\left(\mathcal{J}_1^{\otimes  {s_{i_{m}}^{(l)}}}\otimes\mathcal{J}_2^{\otimes  {t_{i_{m}}^{(l)}}}\otimes \mathcal{M}_{(p,q)}\right)^{\oplus h_{(p,q)}}\simeq \left(\mathcal{L}_1^{\otimes {s_{i_{m}}^{(l)}\cdot c_1+p}}\otimes \mathcal{L}_2^{\otimes {t_{i_{m}}^{(l)}\cdot c_2}+q}\right )^{\oplus d}
	\end{eqnarray*} Now consider the vector bundle 
	\begin{eqnarray*}
		\left(\bigoplus(\mathcal{J}_1^{\otimes s_{i_{0}}^{(1)}}\otimes \mathcal{J}_2^{t_{i_{0}}^{(1)}}\otimes \mathcal{M}_{(0,0)})^{\oplus d}\right)\oplus \left(\bigoplus(\mathcal{J}_1^{\otimes s_{i_{1}}^{(1)}}\otimes \mathcal{J}_2^{t_{i_{1}}^{(1)}}\otimes \mathcal{M}_{(0,1)})^{\oplus h_{(0,1)}}\right)    \oplus\cdots\\
		\oplus \left(\bigoplus(\mathcal{J}_1^{\otimes s_{i_{(c_1-1)}}^{(1)}}\otimes \mathcal{J}_2^{t_{i_{(c_1-1)}}^{(1)}}\otimes \mathcal{M}_{(0,c_2-1)})^{\oplus h_{(0,c_2-1)}}\right)\oplus\cdots\\
		\oplus \left(\bigoplus(\mathcal{J}_1^{\otimes s_{i_{(c_2-1)}}^{(c_1-1)}}\otimes \mathcal{J}_2^{t_{i_{(c_2-1)}}^{(c_1-1)}}\otimes \mathcal{M}_{(c_1-1,c_2-1)})^{\oplus h_{(c_1-1,c_2-1)}}\right)
	\end{eqnarray*} 
which is denoted by $\mathcal{R}$. We immediately see that $\pi^*\mathcal{R}\simeq \pi^*(\mathcal{E}^{\oplus d})$. Applying \cite{NO}, Proposition 3.4 implies that $\mathcal{E}^{\oplus d}$ is isomorphic to $\mathcal{R}$. Because Krull--Schmidt Theorem holds for vector bundles on $X$, we conclude that $\mathcal{E}$ is isomorphic to the direct sum of vector bundles of the form 
\begin{eqnarray*}
	\mathcal{J}_1^{\otimes b_1}\otimes\mathcal{J}_2^{\otimes b_2} \otimes \mathcal{M}_{(a_1,a_2)}
	\end{eqnarray*}
with unique $b_i\in\mathbb{Z}$ and $0\leq a_j\leq c_j-1$. Furthermore, since all these bundles are indecomposable by definition, we finally get that all the indecomposable $AS$-bundles have the desired form. This completes the proof.
\end{proof}
\begin{lem}
	Let $X$ be a proper and geometrically integral $k$-scheme and $\mathcal{L},\mathcal{L}_1$ and $\mathcal{L}_2$ line bundles.
	\begin{itemize}
		\item[(i)] If $\mathcal{M}$ is pure of type $\mathcal{L}$, then $\mathrm{End}(\mathcal{M})$ is a central simple $k$-algebra.
		\item[(ii)] There is an (up to isomorphism) unique indecomposable pure vector bundle $\mathcal{M}_{\mathcal{L}}$ of type $\mathcal{L}$.
		\item[(iii)] Let $\mathcal{M}_{\mathcal{L}_1}$ and $\mathcal{M}_{\mathcal{L}_2}$ be pure vector bundles of type $\mathcal{L}_1$ resp. $\mathcal{L}_2$. Then $\mathrm{End}(\mathcal{M}_{\mathcal{L}_1})\otimes \mathrm{End}(\mathcal{M}_{\mathcal{L}_2})$ is Brauer-equivalent to $\mathrm{End}(\mathcal{M}_{\mathcal{L}_1\otimes \mathcal{L}_2})$.
	\end{itemize}
\begin{proof}
Since $X$ is geometrically integral, we have $H^0(X_s,\mathcal{O}_{X_s})\simeq k^s$ (see \cite{NO}, Proposition 4.2). This implies $\mathcal{M}\otimes_k k^s\simeq \mathcal{L}^{\oplus r}$ and therefore
	\begin{eqnarray*}
		\mathrm{End}(\mathcal{M})\otimes_k k^s\simeq \mathrm{Mat}_r(k^s).
	\end{eqnarray*}
Then \cite{KNU}, Theorem (1.1) shows that  $\mathrm{End}(\mathcal{M})$ must be central simple over $k$. This shows (i). Assertion (ii) follows directly from \cite{JK}, Lemma 8. It remains to show (iii). For this, let $\Delta\colon X\rightarrow X\times X$ be the diagonal and $\pi_i\colon X\times X\rightarrow X$, $i=1,2$, the two projections. For $\mathcal{L}:=\pi_1^*\mathcal{L}_1\otimes \pi_2^*\mathcal{L}_2$ one has $\Delta^*\mathcal{L}\simeq \mathcal{L}_1\otimes \mathcal{L}_2$. From \cite{JK}, Corollary 11 it follows $\mathcal{M}_{\Delta^*\mathcal{L}}\simeq \Delta^*\mathcal{M}_{\mathcal{L}}$. Moreover, the proof of Corollary 11 in \emph{loc.cite} shows $\mathrm{End}(\mathcal{M}_{\mathcal{L}})\simeq \mathrm{End}(\Delta^*\mathcal{M}_{\mathcal{L}})\simeq \mathrm{End}(\mathcal{M}_{\mathcal{L}_1\otimes \mathcal{L}_2})$. Now \cite{JK}, p.14 explains that $\mathrm{End}(\mathcal{L}_1)\otimes \mathrm{End}(\mathcal{L}_2)$ is Brauer-equivalent to $\mathrm{End}(\mathcal{M}_{\mathcal{L}_1\otimes \mathcal{L}_2})$. This completes the proof.
\end{proof}		
		
\end{lem}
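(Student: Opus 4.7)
The plan is to handle the three assertions in sequence, in each case reducing to a base change calculation over the separable closure $k^s$. For part (i), I would begin by noting that geometric integrality of $X$ gives $H^0(X_s,\mathcal{O}_{X_s})\simeq k^s$. Using Proposition 4.3 to replace $\bar{k}$ by $k^s$ in the definition of purity, a bundle $\mathcal{M}$ pure of type $\mathcal{L}$ satisfies $\mathcal{M}\otimes_k k^s\simeq \mathcal{L}^{\oplus r}$ for some $r$. Passing to endomorphism algebras and using the $H^0$-computation yields $\mathrm{End}(\mathcal{M})\otimes_k k^s\simeq \mathrm{Mat}_r(k^s)$, so that $\mathrm{End}(\mathcal{M})$ becomes a matrix algebra after a separable field extension and is therefore a central simple $k$-algebra by the standard Galois descent criterion (for instance \cite{KNU}, Theorem (1.1)).

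For part (ii), I would simply invoke an existence and uniqueness result for pure vector bundles of a prescribed line-bundle type, which is already in the literature. Concretely, \cite{JK}, Lemma 8 (equivalently \cite{NO}, Proposition 3.5) produces an indecomposable $\mathcal{M}_{\mathcal{L}}$ of type $\mathcal{L}$, unique up to isomorphism.

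For part (iii), the key idea is to realize $\mathcal{L}_1\otimes \mathcal{L}_2$ as the diagonal pullback of an external tensor product. With $\Delta\colon X\to X\times X$ the diagonal and $\pi_1,\pi_2$ the two projections, set $\mathcal{L}:=\pi_1^*\mathcal{L}_1\otimes \pi_2^*\mathcal{L}_2$, so that $\Delta^*\mathcal{L}\simeq \mathcal{L}_1\otimes \mathcal{L}_2$. By \cite{JK}, Corollary 11, the pure bundle $\mathcal{M}_{\mathcal{L}}$ on $X\times X$ pulls back under $\Delta$ to the pure bundle $\mathcal{M}_{\mathcal{L}_1\otimes \mathcal{L}_2}$ on $X$, and the same corollary (together with the discussion on p.14 of \emph{loc.\ cite}) identifies the endomorphism algebras: $\mathrm{End}(\mathcal{M}_{\mathcal{L}})\simeq \mathrm{End}(\Delta^*\mathcal{M}_{\mathcal{L}})\simeq \mathrm{End}(\mathcal{M}_{\mathcal{L}_1\otimes \mathcal{L}_2})$. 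The external-product construction on $X\times X$ yields the Brauer equivalence $\mathrm{End}(\mathcal{M}_{\mathcal{L}_1})\otimes_k \mathrm{End}(\mathcal{M}_{\mathcal{L}_2})\sim \mathrm{End}(\mathcal{M}_{\mathcal{L}})$, and combining the two gives the desired statement.

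The main obstacle is part (iii): one has to correctly identify the pure bundle associated to $\pi_1^*\mathcal{L}_1\otimes \pi_2^*\mathcal{L}_2$ on $X\times X$ and to verify that the diagonal pullback behaves well both on pure bundles and on their endomorphism algebras, only up to Brauer equivalence. Parts (i) and (ii) are essentially routine consequences of base change and cited prior results, whereas the tensor-product compatibility in (iii) is where the genuine geometric input enters.
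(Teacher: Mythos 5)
Your proposal is correct and follows essentially the same route as the paper's own proof: base change to $k^s$ plus the Galois descent criterion from \cite{KNU} for (i), the citation of \cite{JK}, Lemma 8 for (ii), and the diagonal-pullback of the external product $\pi_1^*\mathcal{L}_1\otimes\pi_2^*\mathcal{L}_2$ on $X\times X$ together with \cite{JK}, Corollary 11 and p.~14 for (iii). No substantive difference to report.
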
 
\begin{prop}
	Let $\mathcal{E}=\mathcal{J}_1^{\otimes b_1}\otimes\cdots \otimes \mathcal{J}_m^{\otimes b_m} \otimes \mathcal{M}_{(a_1,...,a_m)}$ be an indecomposable $AS$-bundle as in Theorem 4.5. Then $\mathrm{End}(\mathcal{E})$ is a central simple $k$-algebra.
\end{prop}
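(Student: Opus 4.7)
The plan is to reduce the claim to Lemma 4.6(i) by stripping off the line-bundle factors and then invoking purity of $\mathcal{M}_{(a_1,\ldots,a_m)}$.

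First I would observe that for any coherent sheaf $\mathcal{F}$ on $X$ and any line bundle $\mathcal{N}\in\mathrm{Pic}(X)$ there is a canonical isomorphism of $k$-algebras
\begin{eqnarray*}
\mathrm{End}(\mathcal{F}\otimes\mathcal{N})\;\simeq\;\mathrm{End}(\mathcal{F}),
\end{eqnarray*}
obtained by conjugating with the invertible $\mathcal{N}$ (equivalently, using $\mathcal{H}om(\mathcal{F}\otimes\mathcal{N},\mathcal{F}\otimes\mathcal{N})\simeq \mathcal{H}om(\mathcal{F},\mathcal{F})\otimes\mathcal{N}\otimes\mathcal{N}^{-1}\simeq \mathcal{H}om(\mathcal{F},\mathcal{F})$ and taking global sections). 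Applying this iteratively with $\mathcal{N}=\mathcal{J}_i^{\otimes b_i}$ to $\mathcal{E}=\mathcal{J}_1^{\otimes b_1}\otimes\cdots\otimes\mathcal{J}_m^{\otimes b_m}\otimes\mathcal{M}_{(a_1,\ldots,a_m)}$ yields
\begin{eqnarray*}
\mathrm{End}(\mathcal{E})\;\simeq\;\mathrm{End}\bigl(\mathcal{M}_{(a_1,\ldots,a_m)}\bigr)
\end{eqnarray*}
as $k$-algebras. Hence it suffices to prove that $\mathrm{End}(\mathcal{M}_{(a_1,\ldots,a_m)})$ is central simple over $k$.

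Next I would recall from the construction preceding Theorem 4.5 that $\mathcal{M}_{(a_1,\ldots,a_m)}$ is, by definition, an indecomposable pure bundle of type $\mathcal{L}_1^{\otimes a_1}\otimes\cdots\otimes\mathcal{L}_m^{\otimes a_m}\in\mathrm{Pic}(X_s)$. In particular, setting $\mathcal{L}=\mathcal{L}_1^{\otimes a_1}\otimes\cdots\otimes\mathcal{L}_m^{\otimes a_m}$, the bundle $\mathcal{M}_{(a_1,\ldots,a_m)}$ is pure of type $\mathcal{L}$ in the sense of Definition 4.1.

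Finally, Lemma 4.6(i) applies verbatim to $\mathcal{M}_{(a_1,\ldots,a_m)}$ with this $\mathcal{L}$ and gives that $\mathrm{End}(\mathcal{M}_{(a_1,\ldots,a_m)})$ is a central simple $k$-algebra. Combining with the reduction above, $\mathrm{End}(\mathcal{E})$ is central simple, as claimed. There is no genuine obstacle here beyond correctly identifying that the twist by line bundles $\mathcal{J}_i^{\otimes b_i}\in\mathrm{Pic}(X)$ (which live over $k$, not merely over $k^s$) preserves the endomorphism algebra, so that the central-simplicity of $\mathrm{End}(\mathcal{E})$ is already encoded in the central-simplicity of the endomorphism algebra of the pure bundle $\mathcal{M}_{(a_1,\ldots,a_m)}$.
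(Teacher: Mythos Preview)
Your proof is correct and follows exactly the same route as the paper: the paper's proof simply states that $\mathrm{End}(\mathcal{E})\simeq \mathrm{End}(\mathcal{M}_{(a_1,\ldots,a_m)})$ and then invokes Lemma~4.6(i). You have merely made explicit the (standard) reason for that isomorphism, namely that twisting by a line bundle in $\mathrm{Pic}(X)$ does not change the endomorphism algebra.
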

\begin{proof}
Since $\mathrm{End}(\mathcal{E})\simeq \mathrm{End}(\mathcal{M}_{(a_1,...,a_m)})$, the assertion follows from Lemma 4.6 (i).	
\end{proof}
\begin{prop}
	Let $\mathcal{E}=\mathcal{J}_1^{\otimes b_1}\otimes\cdots \otimes \mathcal{J}_m^{\otimes b_m} \otimes \mathcal{M}_{(a_1,...,a_m)}$ and $\mathcal{E}'=\mathcal{J}_1^{\otimes b'_1}\otimes\cdots \otimes \mathcal{J}_m^{\otimes b'_m} \otimes \mathcal{M}_{(a'_1,...,a'_m)}$ be two indecopmosable $AS$-bundles as in Theorem 4.5. Then there is a unique positive integer $s$ such that 
\begin{eqnarray*}
	(\mathcal{E}\otimes \mathcal{E}')\simeq  \mathcal{J}_1^{\otimes (b_1+b_1')}\otimes\cdots \otimes \mathcal{J}_m^{\otimes (b_m+b_m')} \otimes \mathcal{M}_{((a_1+a_1'),...,(a_m+a_m'))}^{\oplus s}.
	\end{eqnarray*}	
Moreover, $\mathrm{End}(\mathcal{E}\otimes \mathcal{E}')$ is Brauer-equivalent to $\mathrm{End}(\mathcal{E})\otimes \mathrm{End}(\mathcal{E}')$ in $\mathrm{Br}(k)$.
	\end{prop}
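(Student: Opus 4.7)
The plan is to reduce everything to a statement about the pure bundles $\mathcal{M}_{(a_1,\ldots,a_m)}$ by using base change to $k^s$, the Krull--Schmidt theorem, and parts (ii) and (iii) of Lemma 4.6. First I would isolate the line-bundle contribution by writing
\[\mathcal{E}\otimes\mathcal{E}'\simeq \mathcal{J}_1^{\otimes(b_1+b_1')}\otimes\cdots\otimes\mathcal{J}_m^{\otimes(b_m+b_m')}\otimes\bigl(\mathcal{M}_{(a_1,\ldots,a_m)}\otimes\mathcal{M}_{(a_1',\ldots,a_m')}\bigr),\]
which reduces the problem to analyzing $\mathcal{M}_{(a_1,\ldots,a_m)}\otimes\mathcal{M}_{(a_1',\ldots,a_m')}$.

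Next, I would verify purity of this tensor product. Writing $r=\mathrm{rk}(\mathcal{M}_{(a_1,\ldots,a_m)})$ and $r'=\mathrm{rk}(\mathcal{M}_{(a_1',\ldots,a_m')})$, base change to $k^s$ yields
\[\bigl(\mathcal{M}_{(a_1,\ldots,a_m)}\otimes\mathcal{M}_{(a_1',\ldots,a_m')}\bigr)\otimes_k k^s\simeq \bigl(\mathcal{L}_1^{\otimes(a_1+a_1')}\otimes\cdots\otimes\mathcal{L}_m^{\otimes(a_m+a_m')}\bigr)^{\oplus rr'},\]
so the tensor product is pure of type $\mathcal{L}_1^{\otimes(a_1+a_1')}\otimes\cdots\otimes\mathcal{L}_m^{\otimes(a_m+a_m')}$. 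Since $\mathcal{M}_{(a_1+a_1',\ldots,a_m+a_m')}$ is, by construction and by Lemma 4.6 (ii), the unique (up to isomorphism) indecomposable pure bundle of this type, Krull--Schmidt forces
\[\mathcal{M}_{(a_1,\ldots,a_m)}\otimes\mathcal{M}_{(a_1',\ldots,a_m')}\simeq \mathcal{M}_{(a_1+a_1',\ldots,a_m+a_m')}^{\oplus s}\]
for a unique positive integer $s$, namely $s=rr'/\mathrm{rk}(\mathcal{M}_{(a_1+a_1',\ldots,a_m+a_m')})$. Combined with the line-bundle factors from the first step, this yields the claimed decomposition of $\mathcal{E}\otimes\mathcal{E}'$.

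For the Brauer-equivalence assertion, I would use that tensoring with invertible sheaves does not change the endomorphism algebra, giving $\mathrm{End}(\mathcal{E})\simeq\mathrm{End}(\mathcal{M}_{(a_1,\ldots,a_m)})$ and likewise for $\mathcal{E}'$. From the decomposition above,
\[\mathrm{End}(\mathcal{E}\otimes\mathcal{E}')\simeq \mathrm{Mat}_s\bigl(\mathrm{End}(\mathcal{M}_{(a_1+a_1',\ldots,a_m+a_m')})\bigr),\]
which is Brauer-equivalent to $\mathrm{End}(\mathcal{M}_{(a_1+a_1',\ldots,a_m+a_m')})$. By Lemma 4.6 (iii), the latter is Brauer-equivalent to $\mathrm{End}(\mathcal{M}_{(a_1,\ldots,a_m)})\otimes\mathrm{End}(\mathcal{M}_{(a_1',\ldots,a_m')})\simeq \mathrm{End}(\mathcal{E})\otimes\mathrm{End}(\mathcal{E}')$, as required.

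The main subtlety is the uniqueness of $s$: one must invoke Krull--Schmidt on $X$ (not merely on $X_s$) together with the uniqueness clause of Lemma 4.6 (ii). Once this is established, the remaining computations are essentially formal applications of the results already in place.
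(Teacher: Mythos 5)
Your argument is correct and follows essentially the same route as the paper: reduce to the tensor product $\mathcal{M}_{(a_1,\ldots,a_m)}\otimes\mathcal{M}_{(a_1',\ldots,a_m')}$, observe it is pure of type $\mathcal{L}_1^{\otimes(a_1+a_1')}\otimes\cdots\otimes\mathcal{L}_m^{\otimes(a_m+a_m')}$, deduce the decomposition from the uniqueness statement in Lemma 4.6 (ii), and get the Brauer-equivalence from $\mathrm{End}(\mathcal{E})\simeq\mathrm{End}(\mathcal{M}_{(a_1,\ldots,a_m)})$ together with Lemma 4.6 (iii). You merely make explicit the Krull--Schmidt and matrix-algebra steps that the paper leaves implicit.
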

\begin{proof}
Note that the $AS$-bundle $\mathcal{M}_{(a_1,...,a_m)}\otimes \mathcal{M}_{(a'_1,...,a'_m)}$ is a pure vector bundle of type $\mathcal{L}_1^{\otimes (a_1+a_1')}\otimes\cdots \otimes \mathcal{L}_m^{\otimes (a_m+a_m')}$.  The first assertion follows from Lemma 4.6 (ii).	Now we want to prove that $\mathrm{End}(\mathcal{E}\otimes \mathcal{E}')$ is Brauer-equivalent to $\mathrm{End}(\mathcal{E})\otimes \mathrm{End}(\mathcal{E}')$. As mentioned in the proof of Proposition 4.7, we have $\mathrm{End}(\mathcal{E})\simeq \mathrm{End}(\mathcal{M}_{(a_1,...,a_m)})$ and $\mathrm{End}(\mathcal{E}')\simeq \mathrm{End}(\mathcal{M}_{(a'_1,...,a'_m)})$. 
We conclude with Lemma 4.6 (iii). 
\end{proof}

\section{Proof of Theorem 1.1}
\begin{proof}
According to Theorem 4.4, the closed points of $\mathrm{Pic}_{(X/k)(et)}$ are in one-to-one correspondence with isomorphism classes of indecomposable $AS$-bundles. Since $X$ is proper over $k$, we have $\mathrm{Pi}_{(X/k)(\mathrm{et})}\simeq \mathrm{Pic}_{(X/k)(\mathrm{fppf})}$ as abelian groups. And because $\mathrm{Pic}(X_s)\simeq \mathbb{Z}^{\oplus m}$, we can use Theorem 4.5 to obtain a classification of all indecomposable $AS$-bundles on $X$. In particular, the $k$-rational points of $\mathrm{Pic}_{(X/k)(et)}$ correspond to indecomposable $AS$-bundles. As mentioned in the introduction, being pure vector bundle of type $\mathcal{L}\in \mathrm{Pic}(X_s)$ is equvalent to $\mathcal{L}\in \mathrm{Pic}_{\Gamma}(X_s)$. By assumption, there are pure vector bundles of type $\mathcal{L}_i$. This actually implies that the basis $\mathcal{L}_1,...,\mathcal{L}_m$ of $\mathrm{Pic}(X_s)$ consists of Galois invariant line bundles. Therefore, the indecomposable $AS$-bundles are in one-to-one correspondence with the $k$-rational points of $\mathrm{Pic}_{(X/k)(et)}$. 
Now consider the exact sequence from the introduction and specialize it to the case $T=S=\mathrm{Spec}(k)$. We get the following exact sequence
\begin{eqnarray*}
	0\longrightarrow \mathrm{Pic}(X)\longrightarrow \mathrm{Pic}_{(X/S)(\mathrm{fppf})}(k)\stackrel{\delta}{\longrightarrow}\mathrm{Br}(k)\longrightarrow \mathrm{Br}'(X)
\end{eqnarray*}	
where $\delta(\mathcal{E})=[\mathrm{End}(\mathcal{E})]\in \mathrm{Br}(k)$ for an indecomposable $AS$-bundle $\mathcal{E}$.
Finally, use Theorem 4.5, Lemma 4.6, Lemma 4.8, and \cite{PAS}, Lemma 3.4 
to conclude that $\mathrm{Am}(X)$ is indeed generated by the set $D$ which consists of the classes $\mathrm{End}(\mathcal{M}_{e_i})$, where $i\in\{1,...m\}$ satisfying $c_i\geq 2$. The above arguments also show $\#D\leq \mathrm{rank}(\mathrm{Pic}(X))$. It remains to show that if $X$ is geometrically rational, one has $\mathrm{Br}(X)\simeq \mathrm{Br}(k)/\mathrm{Am}(X)$. For this, we consider the Hochschild--Serre spectral sequence $H^p(k,H^q(X_s,\mathbb{G}_m))\Rightarrow H^{p+q}(X,\mathbb{G}_m)$. Since $X$ is geometrically rational, we have $\mathrm{Br}(X_s)=0$. The spectral sequence then yields
\begin{eqnarray*}
	\mathrm{Pic}_{\Gamma}(X_s)\longrightarrow \mathrm{Br}(k)\longrightarrow \mathrm{Br}(X)\longrightarrow H^1(k,\mathrm{Pic}(X_s))
	\end{eqnarray*}
where $\Gamma$ denotes the absolute Galois group. By assumption, there are pure vector bundles of type $\mathcal{L}_i$. This actually implies that the basis $\mathcal{L}_1,...,\mathcal{L}_m$ of $\mathrm{Pic}(X_s)$ consists of Galois invariant line bundles. Therefore $\mathrm{Pic}(X_s)$ is a permutation $\Gamma$-module and hence $H^1(k,\mathrm{Pic}(X_s))=0$. The above exact sequence yields $\mathrm{Br}(X)\simeq \mathrm{Br}(k)/\mathrm{Am}(X)$. This completes the proof.
\end{proof}

\noindent

\noindent(\textbf{proof of Corollary 1.2}):\\
\noindent
Recall that for any flag $G/P$ of classical type associated to some semisimple simply connected and $k$-split $G$, one has $\mathrm{Pic}(G/P)\simeq \mathbb{Z}^{\oplus m}$. Now, let $G$ be an semisimple simply connected algebraic group over $k$. Consider a twisted flag $X:={_\gamma}(G/P)$. Note that after base change to a separable closure $k^s$ we have $X_s\simeq G_s/P_s$. 
\noindent
Denote by $\mathcal{A}_1,...,\mathcal{A}_m$ the basis of $\mathrm{Pic}(X_s)$ given in \cite{METI}, p.55f. From \cite{METI}, p.37 we conclude that $\mathrm{End}(\mathcal{B}_{\mathcal{A}_i})$ is a Tits algebra. Using these $\mathcal{A}_i$, we can construct a basis $\mathcal{L}_1,...,\mathcal{L}_m$ of $\mathrm{Pic}(X_s)\simeq \mathbb{Z}^{\oplus m}$ 
consisting of Galois-invariant line bundles. Let $\mathcal{J}_1,...\mathcal{J}_m$ be the line bundles in $\mathrm{Pic}(X)$ satisfying $\mathcal{J}_j\otimes_k k^s \simeq \mathcal{L}_j^{\otimes c_j}$ with minimal $c_i$. 
Notice that $H^0(X_s,\mathcal{O}_{X_s})\simeq k^s$. According to Theorem 4.4, the closed points of $\mathrm{Pic}_{(X/k)(et)}$ are in one-to-one correspondence with isomorphism classes of indecomposable $AS$-bundles. Since $X$ is proper over $k$, we have $\mathrm{Pic}_{(X/k)(\mathrm{et})}\simeq \mathrm{Pic}_{(X/k)(\mathrm{fppf})}$ as abelian groups. And because $\mathrm{Pic}(G_s/P_s)\simeq \mathbb{Z}^{\oplus m}$ and since we have a basis $\mathcal{L}_1,...,\mathcal{L}_m$ of $\mathrm{Pic}(X_s)$ consisting of Galois-invariant line bundles, it follows from \cite{NO}, Proposition 4.3 that there are (unique) pure indecomposable vector bundles $\mathcal{W}_{\mathcal{L}_i}$ of type $\mathcal{L}_j$. Moreover, we have that the indecomposable $AS$-bundles are in one-to-one correspondence with the $k$-rational points of $\mathrm{Pic}_{(X/k)(et)}$. Now we can use Theorem 4.5 from above to obtain a classification of all indecomposable $AS$-bundles on the twisted flags of classical type. By construction, and by applying Lemma 4.6 (iii) we conclude that there exist integers $a_1,...,a_m$ such that $\mathrm{End}(\mathcal{W}_{\mathcal{L}_i})$ is Brauer equivalent to $\mathrm{End}(\mathcal{B}_{\mathcal{A}_1})^{\otimes a_1}\otimes\cdots \otimes \mathrm{End}(\mathcal{B}_{\mathcal{A}_m})^{\otimes a_m}$.

Now consider the exact sequence from the introduction and specialize it to the case $T=S=\mathrm{Spec}(k)$. We get the following exact sequence
\begin{eqnarray*}
	0\longrightarrow \mathrm{Pic}(X)\longrightarrow \mathrm{Pic}_{(X/S)(\mathrm{fppf})}(k)\stackrel{\delta}{\longrightarrow}\mathrm{Br}(k)\longrightarrow \mathrm{Br}'(X)
\end{eqnarray*}	
where $\delta(\mathcal{E})=[\mathrm{End}(\mathcal{E})]\in \mathrm{Br}(k)$ for an indecomposable $AS$-bundle $\mathcal{E}$.
Finally, use Theorem 4.5, Lemma 4.6, Lemma 4.8 and \cite{PAS}, Lemma 3.4 
to conclude that $\mathrm{Am}(X)$ is indeed generated by the subset $D$ which consists of classes of Tits algebras coming from a basis of $\mathrm{Ch}(P_s)^{\Gamma}$ which is given by some of the $\mathcal{L}_i$. For these $\mathcal{L}_i$ one has $c_i\geq 2$. The above arguments also show $\#D\leq \mathrm{rank}(\mathrm{Pic}(X_s))$. This completes the proof.\\

\noindent
(\textbf{proof of Corollary 1.3})

\noindent
We sketch the proof for $X=X_1\times X_2$. Let $X_1={_{\gamma_1}}(G_1/P_1)$ and $X_2={_{\gamma_2}}(G_2/P_2)$ and notice that $\mathrm{Pic}(X_s)\simeq \mathrm{Pic}((X_1)_s)\times \mathrm{Pic}((X_2)_s)$ since $(X_i)_s$ are rational over $k^s$. Therefore $\mathrm{Pic}(X_s)\simeq \mathbb{Z}^{\oplus m_1}\oplus \mathbb{Z}^{\oplus m_2}$. This implies $\mathrm{Pic}(X)\simeq \mathbb{Z}^{\oplus m_1+m_2}$. Now let $\mathcal{L}_1,...,\mathcal{L}_{m_1}$ be the generators of $\mathrm{Pic}((X_1)_s)$ and $\mathcal{K}_1,...,\mathcal{K}_{m_2}$ the generators of $\mathrm{Pic}((X_2)_s)$. Now proceed as in the proof of Theorem 1.1  and use \cite{MPW}, Corollary 2.3 to obtain the desired assertion.
\begin{rema}
\textnormal{We wonder whether the Amitsur subgroup is a complete birational invariant for twisted flags of classical type. In the special case of  Brauer--Severi varieties, this problem is known as the \emph{Amitsur conjecture for central simple algebras} \cite{AM}. This conjecture is still open in general. For details and results in this direction we refer to \cite{GSS} and \cite{KR1} and references therein.} 	
\end{rema}
\begin{exam}
	\textnormal{Let $X$ be a Brauer--Severi variety corresponding to the central simple algebra $A$. Then $X_s\simeq \mathbb{P}^n$ and $\mathrm{Pic}(X_s)=\mathbb{Z}$ is generated by the ample line bundle $\mathcal{O}_{\mathbb{P}^n}(1)$. In Section 4 we showed that there is a (up to isomorphism) unique $AS$-bundle $\mathcal{M}_1$ of type $\mathcal{O}_{\mathbb{P}^n}(1)$. The proof of Theorem 1.1 actually shows that $[\mathrm{End}(\mathcal{M}_1)]$ generates $\mathrm{Am}(X)$. It is well known that $[\mathrm{End}(\mathcal{M}_1)]=[A]^{-1}$ (see for instance \cite{PAS}, p.571). Therefore, $\mathrm{Am}(X)=\langle [A]\rangle$. This gives back the classical result due to Ch\^{a}telet which is mentioned in the introduction.}
\end{exam}

\begin{exam}
	\textnormal{Let $X=\mathrm{BS}(d,A)$ be a generalized Brauer--Severi corresponding to a central simple algebra $A$ of degree $n$. We have $X_s\simeq \mathrm{Grass}(d,n)$ and $\mathrm{Pic}(X_s)=\mathbb{Z}$ is generated by the ample line bundle $\mathcal{O}(1)=\mathrm{det}(\mathcal{Q})$ where $\mathcal{Q}$ is the universal quotient bundle on $\mathrm{Grass}(d,n)$. As explained in \cite{NO}, p.16 there is a (up to isomorphism) unique $AS$-bundle $\mathcal{N}$ of type $\mathcal{O}(1)$. Moreover, it is well known that $\mathrm{End}(\mathcal{N})$ is Brauer-equivalent to $A^{\otimes -d}$ (see for instance \cite{PAS}, p.572). Hence $\mathrm{Am}(X)=\langle [A^{\otimes d}]\rangle$. This gives back \cite{BLS}, Theorem 7.  }
\end{exam}

\section{Application to noncommutative motives}
	
\noindent(\textbf{proof of Theorem 1.4 and Corollary 1.6})

\noindent
If $X$ and $Y$ are birational, we conclude from \cite{LIT}, Proposition 2.10 that $\mathrm{Am}(X)=\mathrm{Am}(Y)$ in $\mathrm{Br}(k)$. According to Theorem 1.1, both Amitsur subgroups $\mathrm{Am}(X)$ and $\mathrm{Am}(Y)$ are generated by certain Tits algebras of the algebraic groups involved. Since the Amitsur subgroup is a finitely generated torsion abelian subgroup of $\mathrm{Br}(k)$, we conclude with the fundamental theorem of finitely generated abelian groups that 
\begin{eqnarray*}
	\mathrm{Am}(X)\simeq \mathbb{Z}/p_1^{r_1}\mathbb{Z}\times\cdots \times \mathbb{Z}/p_s^{r_s}\mathbb{Z}, \ \mathrm{Am}(Y)\simeq \mathbb{Z}/q_1^{v_1}\mathbb{Z}\times\cdots \times \mathbb{Z}/q_t^{v_t}\mathbb{Z}
	\end{eqnarray*}
with uniquely determined $p_i^{r_i}$ and $q_j^{v_j}$ where $p_i$ and $q_j$ are prime numbers. Since $\mathrm{Am}(X)=\mathrm{Am}(Y)$, we have $s=t$ and isomorphic factors up to permutation. Without loss of generality we assume $p_i=q_i$ and therefore $r_i=v_i$. Let $a_i$ be a generator of $\mathbb{Z}/p_i^{r_i}\mathbb{Z}$ and $b_i$ a generator of $\mathbb{Z}/q_i^{v_i}\mathbb{Z}$. Denote by $e_i=(0,...,a_i,0,...,0)$ $i=1,...,s$ a set of generators of $\mathbb{Z}/p_1^{r_1}\mathbb{Z}\times\cdots \times \mathbb{Z}/p_s^{r_s}\mathbb{Z}$ and by $f_i=(0,...,b_i,0,...,0)$ $i=1,...,t$ a set of generators of $\mathbb{Z}/q_1^{v_1}\mathbb{Z}\times\cdots \times \mathbb{Z}/q_t^{v_t}\mathbb{Z}$. The corresponding central simple algebras are denoted by $A_{e_i}$ and $B_{f_i}$ respectively. By definition, we have $[A_{e_i}]=[B_{f_i}^{\otimes n_i}]$ for a unique positive integer $n_i$. Note that from \cite{TAZ}, (2.18) it follows 
\begin{eqnarray*}
	\bigoplus_{i=1}^sU(A_{e_i})\simeq \bigoplus_{i=1}^tU(B_{f_i}^{\otimes n_i}).
	\end{eqnarray*}
Now let $A_g$ be the central simple division algebra corresponding to $g\in\mathrm{Am}(X)$. Analogously, let $B_h$ be the central simple division algebra corresponding to $h\in\mathrm{Am}(Y)$. Now \cite{TAZ}, Theorem Theorem 2.19 implies 
\begin{eqnarray*}
	M_X:=\bigoplus _{g\in \mathrm{Am}(X)}U(A_{g})\  \simeq \bigoplus_{h\in\mathrm{Am}(Y)}U(B_{h})=:M_Y.
	\end{eqnarray*}
\noindent
\textbf{Claim:}
\noindent
Let $G$, $P$ and $\gamma$ be as in Corollary 1.2 and let $\rho_1,...,\rho_n$ be a $\mathrm{Ch}$-homogeneous basis of $R(P)$ over $R(G)$, where $R(P)$ and $R(G)$ denote the corresponding representation rings. Let $A_{\chi(i),\gamma}$ be the Tits central simple algebras associated to $\rho_i$ and let $\mathrm{Ti}_{\rho_1,...,\rho_n}(X):=\langle A_{\chi(1),\gamma},...,A_{\chi(n),\gamma} \rangle$ be the subgroup generated by these Tits algebras. Denote by 
\begin{eqnarray*}
M\mathrm{Ti}_{\rho_1,...,\rho_n}(X):=\bigoplus_{f\in \mathrm{Ti}_{\rho_1,...,\rho_n}(X)}U(A_f),
\end{eqnarray*}
where $A_f$ are the central simple algebras corresponding to $f$. Then $U(X)\oplus N'=M\mathrm{Ti}_{\rho_1,...,\rho_n}(X)$ with $N'\in\mathrm{CSA}(k)^{\oplus}$.
\begin{proof}
By \cite{TA1S}, Theorem 2.1 we conclude that $U(X)=U( A_{\chi(1),\gamma})\oplus\cdots \oplus U( A_{\chi(n),\gamma})$. Obviously, $U(X)$ is a direct summand of $M\mathrm{Ti}_{\rho_1,...,\rho_n}(X)$ and $N'\in\mathrm{CSA}(k)^{\oplus}$ by construction.	
\end{proof}
\noindent
Note that $\mathrm{Am}(X)$ is a subgroup of $\mathrm{Ti}_{\rho_1,...,\rho_n}(X)$. It is clear that there exists a $N\in \mathrm{CSA}(k)^{\oplus}$ such that $M_X\oplus N=M\mathrm{Ti}_{\rho_1,...,\rho_n}(X)$. Now use the claim to conclude that $U(X)$ is a direct summand of $M\mathrm{Ti}_{\rho_1,...,\rho_n}(X)$. Hence $M_X\oplus N=U(X)\oplus N'$ with $N'\in \mathrm{CSA}(k)^{\oplus}$. By the same argument we obtain $M_Y\oplus Q=U(Y)\oplus Q'$. This completes the proof of Theorem 1.4 and Corollary 1.6.
\begin{exam}
	\textnormal{Let $X$ and $Y$ be Brauer--Severi varieties corresponding to central simple algebras $A$ and $B$. Then $\mathrm{Am}(X)=\langle A\rangle $ and $\mathrm{Am}(Y)=\langle B\rangle $. Now if $X$ and $Y$ are birational, then $\langle A\rangle=\langle B\rangle $ according to a theorem of Amitsur \cite{AM}. Since $\mathrm{Am}(X)$ and $\mathrm{Am}(Y)$ are cyclic of order $\mathrm{per}(A)=\mathrm{per}(B):=m$, we conclude from \cite{TAZ}, Theorem 3.20
	\begin{eqnarray*}
		M_X=U(k)\oplus U(A)\oplus \cdots \oplus U(A^{\otimes m-1})\simeq U(k)\oplus U(B)\oplus \cdots \oplus U(B^{\otimes m-1})=M_Y
		\end{eqnarray*}	
		Since $m\cdot r=\mathrm{deg}(A)=\mathrm{deg}(B)$, we can use \cite{TAZ}, Theorem 2.19 to conclude that $M_X^{\oplus r}\simeq U(X)$ and $M_Y^{\oplus r}\simeq U(Y)$. So in the case of Brauer--Severi varieties we have $N=M_X^{\oplus (r-1)}$, $N'=0$, $Q=M_Y^{\oplus (r-1)}$ and $Q'=0$.}
\end{exam}
\begin{exam}
	\textnormal{Let $X=\mathrm{BS}(d,A)$ and $Y=\mathrm{BS}(d,B)$ be generalized Brauer--Severi varieties corresponding to central simple algebras $A$ and $B$. Then $\mathrm{Am}(X)=\langle A^{\otimes d}\rangle $ and $\mathrm{Am}(Y)=\langle B^{\otimes d}\rangle $ (see \cite{BLS}, Theorem 7). If $X$ is birational to $Y$, then $\mathrm{Am}(X)=\mathrm{Am}(Y)$. Notice that $\mathrm{Am}(X)$ and $\mathrm{Am}(Y)$ are cyclic of order $m=\mathrm{per}(A)/\mathrm{gcd}(d,\mathrm{per}(A))$. According to \cite{TAZ}, Theorem 3.20 one has $M_X\simeq M_Y$, where 
	\begin{eqnarray*}
		M_X=U(k)\oplus U(A^{\otimes d})\oplus \cdots \oplus U(A^{\otimes dm-d}),\\
		M_Y=U(k)\oplus U(B^{\otimes d})\oplus \cdots \oplus U(B^{\otimes dm-d}).
	\end{eqnarray*}		
 One can use \cite{TAZ}, Theorem 2.19 and 3.18 to conclude that $M_X$ is a direct summand of $U(X)$ and $M_Y$ a direct summand of $U(Y)$. Again, we have $N'=0=Q'$.}
\end{exam}
In particular, for birational (generalized) Brauer--Severi varieties $X$ and $Y$ one has $U(X)\oplus Q\simeq U(Y)\oplus N$. Not that if $X$ and $Y$ are Brauer--Severi, then $N\simeq Q$ and \cite{TA2S}, Proposition 4.5 implies $U(X)\simeq U(Y)$. In this way we get back \cite{TAZ}, Proposition 3.15. 
Using the theory of semiorthogonal decompositions one can try to generalize Corollary 1.6 to arbitrary proper and geometrically integral $k$-schemes. We recall the definition of exceptional object and semiorthogonal decomposition.

 Let $\mathcal{D}$ be a triangulated category and $\mathcal{C}$ a triangulated subcategory. The subcategory $\mathcal{C}$ is called \emph{thick} if it is closed under isomorphisms and direct summands. For a subset $A$ of objects of $\mathcal{D}$ we denote by $\langle A\rangle$ the smallest full thick subcategory of $\mathcal{D}$ containing the elements of $A$. 
Furthermore, we define $A^{\perp}$ to be the subcategory of $\mathcal{D}$ consisting of all objects $M$ such that $\mathrm{Hom}_{\mathcal{D}}(E[i],M)=0$ for all $i\in \mathbb{Z}$ and all elements $E$ of $A$. We say that $A$ \emph{generates} $\mathcal{D}$ if $A^{\perp}=0$. Now assume $\mathcal{D}$ admits arbitrary direct sums. An object $B$ is called \emph{compact} if $\mathrm{Hom}_{\mathcal{D}}(B,-)$ commutes with direct sums. Denoting by $\mathcal{D}^c$ the subcategory of compact objects we say that $\mathcal{D}$ is \emph{compactly generated} if the objects of $\mathcal{D}^c$ generate $\mathcal{D}$. One has the following important theorem (see \cite{BVS}, Theorem 2.1.2).
\begin{thm}
	Let $\mathcal{D}$ be a compactly generated triangulated category. Then a set of objects $A\subset \mathcal{D}^c$ generates $\mathcal{D}$ if and only if $\langle A\rangle=\mathcal{D}^c$.  
\end{thm}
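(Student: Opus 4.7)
The plan is to prove the two implications separately, with the converse being the substantial one. Before splitting, observe that $A^\perp$ is automatically a thick subcategory of $\mathcal{D}$, closed under shifts, cones and direct summands, by a routine diagram chase in the long exact sequence obtained from applying $\mathrm{Hom}_{\mathcal{D}}(E[i],-)$ to a distinguished triangle. Since every object in $A$ is compact, $\mathrm{Hom}_{\mathcal{D}}(E[i],-)$ commutes with arbitrary direct sums for every $E\in A$, and hence $A^\perp$ is in fact localizing.

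For the direction $\langle A\rangle=\mathcal{D}^c\Rightarrow A^\perp=0$, I would note that by the thickness observation one has $A^\perp=\langle A\rangle^\perp=(\mathcal{D}^c)^\perp$. But $\mathcal{D}$ being compactly generated means precisely that $(\mathcal{D}^c)^\perp=0$, so the implication is immediate.

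For the converse, suppose $A^\perp=0$ and let $\mathrm{Loc}(A)$ denote the smallest localizing subcategory of $\mathcal{D}$ containing $A$. An induction along the construction of $\mathrm{Loc}(A)$ (closure under shifts, cones, direct sums, summands) shows $\mathrm{Loc}(A)^\perp=A^\perp=0$. Because $\mathcal{D}$ is compactly generated, Brown representability provides a right adjoint to the inclusion $\mathrm{Loc}(A)\hookrightarrow\mathcal{D}$, so that $(\mathrm{Loc}(A),\mathrm{Loc}(A)^\perp)$ is a semiorthogonal decomposition of $\mathcal{D}$. Since $\mathrm{Loc}(A)^\perp=0$, this forces $\mathrm{Loc}(A)=\mathcal{D}$. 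The argument then concludes by invoking the Neeman--Thomason localization theorem: for a set $A$ of compact objects in a compactly generated triangulated category, one has $\mathrm{Loc}(A)\cap\mathcal{D}^c=\langle A\rangle$. Combining this with $\mathrm{Loc}(A)=\mathcal{D}$ yields $\langle A\rangle=\mathcal{D}^c$, as desired.

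The main obstacle is precisely the identification $\mathrm{Loc}(A)\cap\mathcal{D}^c=\langle A\rangle$, i.e., the non-trivial fact that every compact object sitting in the localizing closure of $A$ already lies in the thick closure of $A$. This is Neeman's refinement of a theorem of Thomason and constitutes the technical heart of the statement; rather than reproduce the approximation argument, I would simply cite it, in line with the reference \cite{BVS}, Theorem 2.1.2 already given in the statement.
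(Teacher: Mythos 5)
Your argument is correct: the easy direction via $A^{\perp}=\langle A\rangle^{\perp}=(\mathcal{D}^c)^{\perp}=0$, and the converse via Bousfield localization (Brown representability for the compactly generated subcategory $\mathrm{Loc}(A)$, forcing $\mathrm{Loc}(A)=\mathcal{D}$) together with the Neeman--Thomason identification of $\langle A\rangle$ with the compact objects of $\mathrm{Loc}(A)$, is exactly the standard proof. The paper itself offers no proof of this statement and simply quotes \cite{BVS}, Theorem 2.1.2, whose proof proceeds along the same lines as your sketch, so citing the localization theorem for the technical step is entirely in keeping with the paper's treatment.
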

For a smooth projective scheme $X$ over $k$, we denote by $D(\mathrm{Qcoh}(X))$ the derived category of quasicoherent sheaves on $X$. The bounded derived category of coherent sheaves is denoted by $D^b(X)$. Note that $D(\mathrm{Qcoh}(X))$ is compactly generated with compact objects being all of $D^b(X)$. For details on generating see \cite{BVS}.

\begin{defi}
	\textnormal{Let $A$ be a division algebra over $k$, not necessarily central. An object $\mathcal{E}\in D^b(X)$ is called \emph{w-exceptional} if $\mathrm{End}(\mathcal{E})=A$ and $\mathrm{Hom}(\mathcal{E},\mathcal{E}[r])=0$ for $r\neq 0$. If $A=k$ the object is called \emph{exceptional}. If $A$ is a separable $k$-algebra, the object $\mathcal{E}$ is called \emph{separable-exceptional}. } 
\end{defi}
\begin{defi}
	\textnormal{A totally ordered set $\{\mathcal{E}_1,...,\mathcal{E}_n\}$ of w-exceptional (resp. separable-exceptional) objects on $X$ is called an \emph{w-exceptional collection} (resp. \emph{separable-exceptional collection}) if $\mathrm{Hom}(\mathcal{E}_i,\mathcal{E}_j[r])=0$ for all integers $r$ whenever $i>j$. An w-exceptional (resp. separable-exceptional) collection is \emph{full} if $\langle\{\mathcal{E}_1,...,\mathcal{E}_n\}\rangle=D^b(X)$ and \emph{strong} if $\mathrm{Hom}(\mathcal{E}_i,\mathcal{E}_j[r])=0$ whenever $r\neq 0$. If the set $\{\mathcal{E}_1,...,\mathcal{E}_n\}$ consists of exceptional objects it is called \emph{exceptional collection}.}
\end{defi}
\begin{exam}
	\textnormal{Let $\mathbb{P}^n$ be the projective space and consider the ordered collection of invertible sheaves $\{\mathcal{O}_{\mathbb{P}^n}, \mathcal{O}_{\mathbb{P}^n}(1),...,\mathcal{O}_{\mathbb{P}^n}(n)\}$. In \cite{BES} Beilinson showed that this is a full strong exceptional collection. }
\end{exam}
A generalization of the notion of a full w-exceptional collection is that of a semiorthogonal decomposition of $D^b(X)$. Recall that a full triangulated subcategory $\mathcal{D}$ of $D^b(X)$ is called \emph{admissible} if the inclusion $\mathcal{D}\hookrightarrow D^b(X)$ has a left and right adjoint functor. 
\begin{defi}
	\textnormal{Let $X$ be a smooth projective variety over $k$. A sequence $\mathcal{D}_1,...,\mathcal{D}_n$ of full triangulated subcategories of $D^b(X)$ is called \emph{semiorthogonal} if all $\mathcal{D}_i\subset D^b(X)$ are admissible and $\mathcal{D}_j\subset \mathcal{D}_i^{\perp}=\{\mathcal{F}\in D^b(X)\mid \mathrm{Hom}(\mathcal{G},\mathcal{F})=0$, $\forall$ $ \mathcal{G}\in\mathcal{D}_i\}$ for $i>j$. Such a sequence defines a \emph{semiorthogonal decomposition} of $D^b(X)$ if the smallest full thick subcategory containing all $\mathcal{D}_i$ equals $D^b(X)$.}
\end{defi}
\noindent
For a semiorthogonal decomposition we write $D^b(X)=\langle \mathcal{D}_1,...,\mathcal{D}_n\rangle$.
\begin{exam}
	\textnormal{Let $\mathcal{E}_1,...,\mathcal{E}_n$ be a full w-exceptional collection on $X$. It is easy to verify that by setting $\mathcal{D}_i=\langle\mathcal{E}_i\rangle$ one gets a semiorthogonal decomposition $D^b(X)=\langle \mathcal{D}_1,...,\mathcal{D}_n\rangle$.}
\end{exam}

The noncommutative motives $M_{T(X)}$ and $M_{T(Y)}$ are defined in the introduction.

\begin{thm2}[Theorem 1.7]
Let $X$ and $Y$ be schemes of pure weak exceptional type. 
If $X$ and $Y$ are birational, then there are direct summands $N,N'\in \mathrm{CSA}^{\oplus }$ of $M_{T(X)}$ and $Q,Q'\in \mathrm{CSA}^{\oplus }$ of $M_{T(Y)}$ such that $U(X)\oplus N'\oplus Q\simeq U(Y)\oplus Q'\oplus N$.
\end{thm2}
\begin{proof}
	The semiorthogonal decompositions 
	\begin{eqnarray*}
		D^b(X)=\langle \mathcal{E}_1,...,\mathcal{E}_m\rangle
	\end{eqnarray*}	
	and
	\begin{eqnarray*}
		D^b(Y)=\langle \mathcal{F}_1,...,\mathcal{F}_n\rangle
	\end{eqnarray*}
	imply $\mathrm{Pic}(X)\simeq \mathbb{Z}^{\oplus m}$ and $\mathrm{Pic}(Y)\simeq \mathbb{Z}^{\oplus n}$. Since $\mathcal{E}_i$ is pure of type $\mathcal{K}_i$, we conclude that $\langle\mathcal{K}_1,...,\mathcal{K}_m\rangle$ is a semiorthogonal decomposition of $D^b(X_s)$ which is induced from a full exceptional collection. Therefore $\mathrm{Pic}(X_s)\simeq \mathbb{Z}^{\oplus m}$. In the same way it follows $\mathrm{Pic}(Y_s)\simeq \mathbb{Z}^{\oplus n}$. Some of the $\mathcal{K}_i$ form a basis of $\mathrm{Pic}(X_s)$. Without loss of generality, let $\mathcal{K}_1,...,\mathcal{K}_r$ be a basis. Then, by assumption, there are pure vector bundles $\mathcal{E}_1,...,\mathcal{E}_r$ of type $\mathcal{K}_1,...,\mathcal{K}_r$. Therefore, the assumptions of Theorem 4.5 are fulfilled. Let $D_X$ (resp. $D_Y$) denote the set of indecopmposable $AS$-bundles on $X$ (resp. $Y$). Since $X$ satisfies the assumptions of Theorem 4.5, the proof of Theorem 1.1 shows that for any indecomposable $AS$-bundle $\mathcal{E}$ on $X$ one has $[\mathrm{End}(\mathcal{E})]\in \mathrm{Br}(k)$. From Theorem 4.5 and Proposition 4.7 we conclude that there are only finitely many Brauer-classes $[\mathrm{End}(\mathcal{E})]\in \mathrm{Br}(k)$ of indecomposable $AS$-bundles. 
	Now let $C_X\subset \mathrm{Br}(k)$ be the subgroup generated by these finitely many Brauer-classes. Analogously, we define $C_Y\subset\mathrm{Br}(k)$. Denote by $A(g)$ the central simple division algebra corresponding to $g\in C_X$ and by $A(h)$ the central simple division algebra corresponding to $h\in C_Y$.  As in the introduction, we set 
	\begin{eqnarray*}
		M_{T(X)}:=\bigoplus _{g\in C_X}U(A(g))\quad \textnormal{and} \quad M_{T(Y)}:=\bigoplus _{h\in C_Y}U(A(h)).
	\end{eqnarray*}
Furthermore, we put 
	\begin{eqnarray*}
	M_{X}:=\bigoplus _{p\in \mathrm{Am}(X)}U(A_{p})\quad \textnormal{and} \quad M_{Y}:=\bigoplus _{q\in \mathrm{Am}(Y)}U(A_{q})
\end{eqnarray*}
where $A_p$ is the central simple division algebra corresponding to $p$ and $A_q$ the central simple division algebra corresponding to $q$.
Consider the simiorthogonal decompositions
\begin{eqnarray*}
	 D^b(X)=\langle \mathcal{E}_1,...,\mathcal{E}_m\rangle
	\end{eqnarray*}	
and 
\begin{eqnarray*}
	D^b(Y)=\langle \mathcal{F}_1,...,\mathcal{F}_n\rangle.
\end{eqnarray*}
By assumption, the vector bundles $\mathcal{E}_1,...,\mathcal{E}_m$ and $\mathcal{F}_1,...,\mathcal{F}_n$ are pure having endomorphism algebras being isomorphic to $k^s$. This implies that $\mathrm{End}(\mathcal{E}_i)$ and $\mathrm{End}(\mathcal{F}_j)$ are central simple algebras (see proof of \cite{NO}, Proposition 3.3). From the construction of noncommutative motives we have
	\begin{eqnarray*}
	U(X):=\bigoplus^m _{i=1}U(\mathrm{End}(\mathcal{E}_i))\quad \textnormal{and} \quad U(Y):=\bigoplus^n_{j=1}U(\mathrm{End}(\mathcal{F}_j)).
\end{eqnarray*}
Obviously, $U(X)$ is a direct summand of $M_{T(X)}$ and therefore there exists $N'\in\mathrm{CSA}(k)^{\oplus}$ such that $U(X)\oplus N'\simeq M_{T(X)}$. In the same way one shows that there is a $Q'\in\mathrm{CSA}(k)^{\oplus}$ such that $U(Y)\oplus Q'\simeq M_{T(Y)}$. Note that $\mathrm{Am}(X)$ is a subgroup of $C_X$. Hence there exists a $N$ such that $M_{T(X)}\simeq M_X\oplus N$. The same holds for $M_Y$. This gives us $U(X)\oplus N'=M_X\oplus N$ and $U(Y)\oplus Q'=M_Y\oplus Q$, implying the equalities $M_X\oplus N\oplus Q=U(X)\oplus N'\oplus Q$ and $M_Y\oplus Q\oplus N=U(Y)\oplus Q'\oplus N$. 
Now if $X$ and $Y$ are birational, it follows $\mathrm{Am}(X)=\mathrm{Am}(Y)$ and therefore $M_X\simeq M_Y$. Hence $U(X)\oplus N'\oplus Q\simeq U(Y)\oplus Q'\oplus N$.
This completes the proof.	
\end{proof}
\begin{cor2}[Corollary 1.8]
	Let $X$ and $Y$ be as in Theorem 1.7. Assume $X$ and $Y$ are birational and let $A_i, 1\leq i\leq n$ and $B_j, 1\leq j\leq m$ be the central simple algebras occuring in $U(X)\oplus N'\oplus Q$ and $U(Y)\oplus Q'\oplus N$ respectively, then $\langle [A_i]\rangle=\langle[B_j]\rangle$ in $\mathrm{Br}(k)$.
\end{cor2}
\begin{proof}
	This follows from \cite{TA2S}, Corollary 4.8.
\end{proof}
\addcontentsline{toc}{section}{References}

\vspace{0.5cm}
\noindent
{\tiny HOCHSCHULE FRESENIUS UNIVERSITY OF APPLIED SCIENCES 40476 D\"USSELDORF, GERMANY.}
E-mail adress: sasa.novakovic@hs-fresenius.de\\
\noindent
{\tiny MATHEMATISCHES INSTITUT, HEINRICH--HEINE--UNIVERSIT\"AT 40225 D\"USSELDORF, GERMANY.}
E-mail adress: novakovic@math.uni-duesseldorf.de

\end{document}